\newtheorem{theorem}{Theorem}
\newtheorem{proposition}[theorem]{Proposition}
\newtheorem{lemma}[theorem]{Lemma}
\newtheorem{question}[theorem]{Question}
\newtheorem{cor}[theorem]{Corollary}
\theoremstyle{definition}
\newtheorem{definition}[theorem]{Definition}
\theoremstyle{remark}
\newtheorem{remark}[theorem]{Remark}
\def\co{\colon\thinspace}
\def\dfn#1{{\em #1}}
\def\R{\mathbb{R}}
\numberwithin{theorem}{section}
\theoremstyle{plain}
\begin{document}

\title{Contact surgery and symplectic caps}

\author{James Conway}

\address{Department of Mathematics \\ University of California, Berkeley \\ Berkeley \\ California}

\email{conway@berkeley.edu}

\author{John B. Etnyre}

\address{School of Mathematics \\ Georgia Institute
of Technology \\  Atlanta  \\ Georgia}

\email{etnyre@math.gatech.edu}

\address{}

\email{}

\subjclass[2000]{57R17}

\begin{abstract}
In this note we show that a closed oriented contact manifold is obtained from the standard contact sphere of the same dimension by contact surgeries on isotropic and coisotropic spheres. In addition, we observe that all closed oriented contact manifolds admit symplectic caps.  
\end{abstract}

\maketitle

\section{Introduction}
The fact that all contact 3--manifold can be obtained from the standard contact sphere $(S^3,\xi_{std})$ by a sequence of Legendrian $(\pm1)$--surgeries was first proved in \cite{DingGeiges04} and has had a profound impact on the study of such manifolds. One can think of a Legendrian knot in a contact 3--manifold as both an isotropic and a coisotropic sphere, and with this in mind, our first main result is a generalization of this result to higher dimensions. 
\begin{theorem}\label{mainsurgery}
Any oriented contact $(2n-1)$--manifold $(M,\xi)$ can be obtained from $(S^{2n-1},\xi_{std})$ by a sequence of contact surgeries on isotropic and coisotropic spheres. 
\end{theorem}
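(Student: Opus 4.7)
The plan is to construct a symplectic cobordism between $(S^{2n-1},\xi_{std})$ and $(M,\xi)$ and decompose it into handles realizing the two allowed surgery types. The essential input is the symplectic cap $(C,\omega)$ of $(M,\xi)$ provided by the other main theorem of this note.

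First I would embed a Darboux ball $B$ in the interior of $C$ and consider $W := C \setminus \mathrm{int}(B)$. A check of orientations shows that $W$ is a symplectic manifold whose boundary consists of $(M,\xi)$ and $(S^{2n-1},\xi_{std})$, one component concave and the other convex with respect to a Liouville form defined near $\partial W$. In the model case where the ambient Liouville points inward from $M$, the outer boundary $M$ is concave for $W$ and the inner boundary $\partial B$ is convex, so $W$ becomes a symplectic cobordism with Liouville flowing from $(M,\xi)$ to $(S^{2n-1},\xi_{std})$.

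Next I would pick a Morse function on $W$ that is gradient-like for the Liouville near the boundary, and attempt to upgrade the resulting smooth handle decomposition so that each handle is either a Weinstein handle (of index $\leq n$, attached along an isotropic sphere) or an anti-Weinstein handle (of index $\geq n$, attached along a coisotropic sphere). To get a surgery sequence running from $(S^{2n-1},\xi_{std})$ up to $(M,\xi)$, I would read the handle decomposition in reverse; under reversal an index-$k$ handle becomes an index-$(2n-k)$ handle, and Weinstein/isotropic swaps with anti-Weinstein/coisotropic, still yielding a legitimate sequence of contact surgeries of the allowed types. After a Smale-style rearrangement by index, this gives exactly the surgery presentation claimed by the theorem.

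The main obstacle is that $W$ is only symplectic, not a priori Weinstein, so its smooth handle decomposition cannot immediately be promoted to one in which every handle is of the desired symplectic type. The key technical input I would need is a statement roughly of the form: \emph{every symplectic cobordism between contact manifolds admits a handle decomposition in which each handle of index $\leq n$ is a Weinstein handle attached along an isotropic sphere, and each handle of index $\geq n$ is an anti-Weinstein handle attached along a coisotropic sphere}. A natural strategy is to cut $W$ along a carefully chosen contact-type hypersurface into a Weinstein piece (carrying the low-index handles) and an anti-Weinstein piece (carrying the high-index handles), and then apply the Eliashberg--Weinstein handle decomposition theorem to each side. A secondary subtlety is that a generic symplectic cap may not admit such a decomposition, so one likely needs to choose or construct the cap $C$ with this additional structure in mind, rather than invoking the cap result as a black box.
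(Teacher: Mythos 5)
There is a genuine gap, in fact two. First, the convexity check in your construction of $W=C\setminus\mathrm{int}(B)$ fails: a Darboux ball $B$ has $\partial B$ as a \emph{convex} boundary (the radial Liouville field points out of $B$), so viewed from the complement that same sphere is \emph{concave}. Since $M$ is already the concave boundary of the cap $C$, the manifold $W$ has two concave boundary components and is not a symplectic cobordism from either boundary to the other in the sense required; this is a well-known obstruction to ``puncturing a cap'' to produce a cobordism. Second, and more seriously, the technical input you flag as needed --- that every symplectic cobordism splits along a contact-type hypersurface into a Weinstein piece and an anti-Weinstein piece, so that every handle is attached along an isotropic or standard coisotropic sphere --- is not available and is essentially the whole difficulty. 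A general strong symplectic cobordism admits no such decomposition (if it did, every strong cobordism would factor as a Weinstein cobordism followed by a reversed one, which is far from known). Worse, the cap in this paper is itself built by gluing a Weinstein cobordism to the Lisca--Mati\'c cap of a Stein domain, and the latter piece is merely symplectic; so invoking the cap as a black box and then trying to re-extract handle structure from it is circular and loses exactly the information you need.

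The argument that works avoids any single cobordism from the sphere to $M$ and instead uses a zig-zag through a common upper bound: one shows (Proposition~\ref{cobordtostein}, via the almost-complex cobordism of Bowden--Crowley--Stipsicz, the Cieliebak--Eliashberg existence theorem, and a cancellation trick with loose Legendrians and the overtwisted classification) that $(M,\xi)$ is \emph{Weinstein} cobordant to some Stein fillable $(M',\xi')$, and separately that $(S^{2n-1},\xi_{std})$ is Weinstein cobordant to $(M',\xi')$ by deleting the $0$--handle of the Stein filling. Corollary~\ref{weinsteinimplysurgey} then converts each Weinstein cobordism into surgeries in \emph{both} directions: ascending a Weinstein cobordism is a sequence of isotropic surgeries, and descending it is the corresponding sequence of coisotropic surgeries on the belt spheres. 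One therefore ascends from $(S^{2n-1},\xi_{std})$ to $(M',\xi')$ by isotropic surgeries and descends to $(M,\xi)$ by coisotropic surgeries. Your instinct that reversing a handle turns an isotropic attachment into a coisotropic one is exactly right, but it must be applied to Weinstein cobordisms, where the co-cores are honest standard coisotropic spheres, not to an arbitrary symplectic cobordism.
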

In this paper a contact structure on an oriented manifold will always be assumed to be co-oriented. 
\begin{remark}
The expert will notice that this result follows, once one has appropriately defined contact surgeries, from any result that a given contact manifold $(M,\xi)$ is Weinstein cobordant to a contact manifold to which $(S^{2n-1},\xi_{std})$ is also Weinstein cobordant. As we point out below, this easily follows from work of  Bowden, Crowley, and Stipsicz \cite{BowdenCrowleyStipsicz15}, just as the $3$--dimensional version follows from \cite{EtnyreHonda02a}. However, illuminating the purely 3--dimensional nature of the construction in \cite{DingGeiges04} has had great influence in contact geometry, and we hope a similar illumination in higher dimensions will prove similarly useful. 

We also note that in \cite{EliashbergMurphyPre15} Eliashberg and Murphy ingeniously construct symplectic cobordisms structure on any smooth cobordism between an overtwisted contact manifold and any contact manifold induced on the upper boundary of the cobordism by an almost-complex structure on the cobordism. While it would seem this result could also be used to establish the above theorem, this is not the case, as their cobordisms are Liouville and are not obtained by simple Weinstein handle attachments. We do note, however, that what we call contact surgeries on isotropic and coisotropic spheres, they discuss as direct and inverse Weinstein surgeries. Our terminology reflects our focus on the contact-geometric nature of the operation we are discussing rather than the symplectic cobordism structure on which Eliashberg and Murphy were focusing. 
\end{remark}

\begin{remark}
Since we are trying to focus on the contact geometric nature of the construction in the main theorem, we spend some time in Section~\ref{coiso} giving a precise description of the spheres on which we can do surgery. The description of, and local models for, isotropic spheres are well-known, but the description of, and local models for, coisotropic spheres is not as well understood. One could simply define their neighborhoods to be the boundaries of co-cores of Weinstein handles, and when focusing on Weinstein cobordisms (as in, say, \cite{EliashbergMurphyPre15}), this is quite natural to do. When focusing on the contact surgery picture, however, it makes more sense to have a specific description of the submanifold on which the surgery is taking place.

In the absence of a standard neighborhood theorem for coisotropic submanifolds of contact manifolds, we show that the coisotropic spheres that we consider actually have a standard neighborhood.  In higher dimensions, it is somewhat complicated to precisely give the appropriate definition of coisotropic spheres and their standard neighborhoods; this is one of the key technical points in the paper and is carried out in Section~\ref{coiso}. From the Weinstein handle point of view, the upshot of our work is that to attach an inverse handle to a sphere, it is sufficient to find a standard coisotropic sphere, and not an embedding of a neighborhood of the co-core.
\end{remark}

\begin{remark}
While completing a draft of this note, Oleg Lazarev sent a draft of his paper \cite{LazarevPre} in which he establishes Theorem~\ref{mainsurgery} for almost-Weinstein fillable contact manifolds $(M^{2n-1}, \xi)$, using isotropic surgery of arbitrary index and coisotropic surgery of index $n$. 
\end{remark}
\begin{remark}
One of the reasons the surgery description of contact 3--manifolds from \cite{DingGeiges04} is so useful is that in \cite{DingGeigesStipsicz04}, Ding, Geiges, and Stipsicz were able to show how the almost-contact structure underlying a contact structure was affected by Legendrian $(\pm1)$--surgeries.  Also, models of surgeries in dimension 3 have enabled results that translate between surgery on transverse knots and surgery on Legendrian knots, \cite{BaldwinEtnyre13, Conway14?}. Both of these successes should be replicable in higher dimensions as well, and will be the subject of a sequel to this paper. 
\end{remark}

Another useful result in 3--dimensional contact geometry is the existence of symplectic caps for a contact manifold. Recall that a symplectic cap for a contact manifold is a compact symplectic manifold with the contact manifold as its concave boundary. The existence of symplectic caps for Stein fillable manifolds was first proved by Lisca and Mati\'c in \cite{LiscaMatic97}, where they used such caps to give the first examples of homology spheres admitting an arbitrarily large number of tight contact structures.  In \cite{EtnyreHonda02a} the second author and Honda showed that any contact 3--manifold has a symplectic cap, and in fact has infinitely many different such caps. 

\begin{theorem}\label{caps}
Any oriented contact $(2n-1)$--manifold $(M,\xi)$ has infinitely many symplectic caps that are not related by a sequence of blowing-up and blowing-down. 
\end{theorem}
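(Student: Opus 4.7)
The plan is to use Theorem~\ref{mainsurgery} to reduce the cap problem for $(M,\xi)$ to a single construction on a contact manifold with a Weinstein filling, and then to produce infinitely many inequivalent caps by varying the closed symplectic manifold used to cap off that filling.

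By the remark following Theorem~\ref{mainsurgery}, the contact manifolds $(M,\xi)$ and $(S^{2n-1},\xi_{std})$ are both Weinstein cobordant to a common contact manifold $(N,\eta)$; call the Weinstein cobordisms $W_M\colon(M,\xi)\to(N,\eta)$ and $W_S\colon(S^{2n-1},\xi_{std})\to(N,\eta)$. Gluing $W_S$ to the standard Weinstein ball $B^{2n}$ along $S^{2n-1}$ produces a Weinstein (in particular Stein) filling $F_N := B^{2n}\cup_{S^{2n-1}} W_S$ of $(N,\eta)$. Alternatively, one could obtain the same cobordisms more concretely by taking the surgery description of $(M,\xi)$ from Theorem~\ref{mainsurgery} and performing all isotropic surgeries first (obtaining $W_S$) and all coisotropic surgeries last, the latter being a Weinstein cobordism in the reverse direction (hence $W_M$).

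Next I would embed $F_N$ as a Stein domain into a smooth closed K\"ahler manifold $X$: this is standard for Stein manifolds, for instance via Remmert's proper embedding into some $\mathbb{C}^N$ followed by projective compactification in $\mathbb{C}P^N$ and a resolution of singularities. The complement $C_N := X \setminus \mathrm{int}\, F_N$ is then a symplectic cap of $(N,\eta)$, and gluing $C_N$ to $W_M$ along $(N,\eta)$ --- matching the convex boundary of $W_M$ to the concave boundary of $C_N$ --- yields a compact symplectic manifold whose only boundary is the concave contact manifold $(M,\xi)$, that is, a symplectic cap of $(M,\xi)$.

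To produce infinitely many pairwise inequivalent caps, I would vary the closed manifold $X$ used above by performing symplectic (Gompf) sums in its interior with a sequence of closed symplectic $2n$-manifolds that are pairwise distinguished by an invariant preserved under symplectic blow-up and blow-down --- for example the symplectic Kodaira dimension, the fundamental group, or an appropriate Gromov--Witten invariant. The main obstacle I expect is precisely this last step: one must identify an invariant of the resulting cap itself (not merely of the ambient $X$) which is both blow-up invariant and takes infinitely many distinct values across the family. In dimension three, Etnyre and Honda achieved this via Seiberg--Witten theory on the closed symplectic $4$-manifolds obtained after capping; in higher dimensions one anticipates an analogous argument using symplectic Kodaira dimension or a suitable Gromov--Witten invariant.
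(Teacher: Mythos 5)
Your construction of a single cap is essentially the paper's: find a Weinstein cobordism from $(M,\xi)$ to a Stein fillable $(N,\eta)$, cap the Stein filling by embedding it in a closed K\"ahler manifold, and glue. One caution there: a bare ``Remmert embedding plus projective compactification'' does not by itself guarantee that the ambient K\"ahler form restricts to the given symplectic form $\omega_\phi$ on the Stein domain, which is exactly what you need for the complement to be a symplectic cap of $(N,\eta)$ that glues to $W_M$; this compatibility is the content of the Lisca--Mati\'c theorem (Theorem~\ref{LM}), which you should cite rather than rederive.

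The genuine gap is the one you flag yourself: you never produce an invariant of the \emph{cap} that is unchanged by blowing up and down and takes infinitely many values, so the infinitude statement is not proved. Symplectic Kodaira dimension and Gromov--Witten invariants are invariants of closed manifolds, not of compact symplectic manifolds with concave boundary, and performing Gompf sums in the interior of $X$ in dimension $2n>4$ requires codimension-two symplectic divisors with matched normal bundles, which you have not exhibited. The paper avoids all of this by varying the \emph{bottom} of the cap rather than the top: it connect-sums $(M,\xi)$ with exotic contact spheres $(S^{2n-1},\xi_k)$ bounding Stein domains $W_k$ (boundary connected sums of $k$ Brieskorn-type fillings) whose homology grows with $k$, and then uses the ``pre-maximal element'' argument of Remark~\ref{finitemaximals} and Lemma~\ref{preprelim} to arrange that, for $k=1,\dots,n$, all of $(M,\xi)\#(S^{2n-1},\xi_k)$ admit Weinstein cobordisms to a single Stein fillable $(M'_n,\xi'_n)$ with the same underlying smooth cobordism. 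The resulting caps are then $W_k$ union a $1$--handle union a fixed smooth piece, so they are distinguished by ordinary homology, which changes under blow-up and blow-down only in a controlled way. This step --- getting a common target so that everything except $W_k$ is fixed --- is the idea missing from your proposal; without it, even a good invariant of the varying closed manifold $X$ would not obviously descend to an invariant separating the caps.
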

\begin{remark}
The existence of symplectic caps for contact structures was originally established for Stein fillable contact structures by Lisca and Mati\'c \cite{LiscaMatic97} and for overtwisted contact structures by Eliashberg and Murphy \cite{EliashbergMurphyPre15}. 
\end{remark}
\begin{remark} \label{finitemaximals}
While completing a draft of this note the authors learned that Oleg Lazarev 
had also obtained Theorem~\ref{caps}, see \cite{LazarevPre}. In addition, he showed that the partial order on contact manifolds coming from Weinstein cobordisms in dimensions above 3 has upper bounds for finite sets. That is, given any finite collection of contact manifolds, there is some Stein fillable contact manifold $(M^{2n-1},\xi)$, for which any contact manifold in the collection has a Weinstein cobordism to $(M^{2n-1},\xi)$. After hearing of his result, we noticed that the proof of Proposition~\ref{cobordtostein} also establishes this result: namely, there is a ``pre-maximal element" in the partial order. That is, there is a Stein fillable contact manifold $(Y^{2n-1},\eta)$  such that every contact manifold is Weinstein cobordant to a contact manifold obtained from $(Y,\eta)$ by attaching a single Weinstein $n$--handle. From this it is easy to conclude that any finite collection of contact manifolds has an upper bound.

We (as well as Lazarev) leave open the question as to whether there is a maximal element to the partial order coming from Stein cobordisms; however, Lazarev shows that if one allows strong symplectic cobordisms, there is in fact a maximal element.
\end{remark}

Of course, an immediate corollary of the existence of symplectic caps is the following embedding result. 

\begin{cor}\label{embed}
Any strong symplectic filling $(X,\omega)$ of a contact manifold can be embedded into a closed symplectic manifold. 
\end{cor}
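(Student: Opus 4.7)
The plan is to deduce Corollary~\ref{embed} directly from Theorem~\ref{caps} by gluing the filling to a cap along the common contact boundary. Let $(X,\omega)$ be a strong symplectic filling of some contact manifold $(M,\xi)$. By Theorem~\ref{caps}, $(M,\xi)$ admits a symplectic cap $(C,\omega_C)$ whose concave boundary is $(M,\xi)$. I would form the topological closed manifold $Z = X \cup_M C$ and then produce a global symplectic form on $Z$ out of $\omega$ and $\omega_C$.

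The only substantive step is carrying out the symplectic gluing near $M$, and this is standard Liouville-collar machinery. On the filling side, since $\partial X$ is convex (contact-type), the outward Liouville vector field gives a collar in which $\omega = d(e^t\alpha_X)$ for $t \in (-\epsilon,0]$ and some contact form $\alpha_X$ on $M$ defining $\xi$. On the cap side, since $M$ is the concave boundary of $C$, an inward Liouville vector field gives a collar in which $\omega_C = d(e^s \alpha_C)$ for $s \in [0,\epsilon)$ and some contact form $\alpha_C$ for $\xi$. Both contact forms support the same co-oriented contact structure, so $\alpha_C = g\alpha_X$ for a positive function $g\co M \to \R_+$.

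Next I would match the two models. Replacing $s$ by $s + \log g$ on the cap collar (which amounts to flowing along the Liouville vector field for time $\log g$, and which leaves $\omega_C$ symplectic) one arranges $\alpha_C = \alpha_X$. After this normalization, the two local expressions glue to give the single symplectic form $d(e^t \alpha_X)$ on the open neighborhood $M \times (-\epsilon, \epsilon)$ of $M$ in $Z$. A partition of unity in the collar direction, or more cleanly a direct concatenation outside fixed compact pieces, combines this collar form with $\omega$ on $X$ minus a smaller collar and with $\omega_C$ on $C$ minus a smaller collar, giving a closed global $2$--form on $Z$ that agrees with $\omega$ and $\omega_C$ away from the seam and is symplectic throughout.

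The main obstacle, if one can call it that, is purely the bookkeeping of matching the two Liouville collars; there is no essential difficulty, as this is the same gluing that defines Weinstein/strong symplectic cobordism composition. Thus $Z$ is the desired closed symplectic manifold containing $(X,\omega)$ as a symplectic submanifold with boundary.
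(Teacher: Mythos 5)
Your proposal is correct and follows exactly the paper's route: apply Theorem~\ref{caps} to get a cap for $(M,\xi)$ and glue it to the filling along the contact boundary. The only difference is that you re-derive the Liouville-collar gluing by hand (modulo the standard rescaling needed when $\log g$ is not everywhere positive), whereas the paper simply invokes the gluing result Theorem~\ref{glue}.
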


One could hope for better and we ask the following question.
\begin{question}
Can any symplectic filling (in particular, a weak symplectic filling) of a contact manifold be embedded into a closed symplectic manifold?
\end{question}
We recall that in dimension 3 the answer is yes, as was shown by Eliashberg \cite{Eliashberg04} and the first author \cite{Etnyre04a}. The existence of such embeddings was crucial to various results in low dimensional topology, see for example \cite{OzsvathSzabo04a}. 

{\bf Acknowledgements.} The authors thank Chris Wendl for helpful comments on the first draft of the paper. The first author was partially supported by NSF grant DMS-1344991; the second author was partially supported by NSF grants DMS-1608684 and~1906414.\hfill$\Box$

\section{Background}

\subsection{Symplectic cobordisms}
We say that a contact manifold $(M,\xi)$ is the {\em convex}, respectively {\em concave}, boundary of a symplectic manifold $(X,\omega)$ if $M$ is a boundary component of $X$ and there is a vector field $v$ defined on a neighborhood of $M$ in $X$ so that $v$ points out of, respectively into, $X$ and $\mathcal{L}_v \omega= \omega$ and $\xi=\ker (\iota_v\omega)$.

A {\em symplectic cobordism} from the contact manifold $(M_-,\xi_-)$ to the contact manifold $(M_+,\xi_+)$ is a symplectic manifold $(X,\omega)$ with $\partial X= -M_-\cup M_+$, where $M_-$ is a concave and $M_+$ is a convex boundary component of $X$. Given a symplectic cobordism, we will use $\partial_\pm X$ to denote the contact manifold $(M_\pm,\xi_\pm)$.  If $\partial_-X=\emptyset$ then $(X,\omega)$ is called a {\em symplectic filling} of $\partial_+X=(M_+,\xi_+)$ and if $\partial_+X=\emptyset$ then $(X,\omega)$ is called a {\em symplectic cap} for $\partial_-X=(M_-,\omega_-)$. 

We now recall a well-known gluing result for symplectic cobordisms.
\begin{theorem}\label{glue}
Let $(X_i,\omega_i)$ be a symplectic cobordism for $i=0,1$.  If there is a diffeomorphism $f: \partial_+X_0 \to \partial_-X_1$ that preserves the contact structures, then $X_0$ can be glued to $X_1$ via $f$ to obtain a cobordism $X$ and there is a symplectic structure $\omega$ on $X$ that restricts to $\omega_0$ on $X_0$ and away from a collar neighborhood of $\partial_-X_1$ restricts to a positive multiple of $\omega_1$. In particular, $(X,\omega)$ is a symplectic cobordism from $\partial_-X_0$ to $\partial_+X_1$. 
\end{theorem}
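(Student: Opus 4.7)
The plan is to normalize $\omega_0$ and $\omega_1$ near the glued boundary using their Liouville vector fields, then construct an interpolating symplectic form on a cylindrical neighborhood that matches $\omega_0$ on one side and a positive constant multiple of $\omega_1$ on the other, and finally glue the three pieces. Using the contactomorphism $f$ to identify $\partial_+X_0$ with $\partial_-X_1=:M$, the two contact forms induced by the Liouville fields generally differ by a positive function on $M$; the role of the constant rescaling of $\omega_1$ is to absorb this discrepancy in a way that keeps the interpolation non-degenerate.

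In detail, the Liouville flows yield collars $(-\delta,0]\times M$ of $\partial_+X_0$ on which $\omega_0=d(e^t\alpha_0)$, and $[0,\delta)\times M$ of $\partial_-X_1$ on which $\omega_1=d(e^t\alpha_1)$, where $\alpha_0,\alpha_1$ are contact forms for $\xi_+$ and $\xi_-$ respectively. Since $f$ pulls $\xi_-$ back to $\xi_+$, we may write $\alpha_1=g\,\alpha_0$ with $g\colon M\to(0,\infty)$. Pick a constant $c>1/\min_M g$ so that $cg>1$ pointwise, a length $T>0$, and a smooth non-decreasing cutoff $\psi\colon[0,T]\to[0,1]$ equal to $0$ near $0$ and to $1$ near $T$. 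On an intermediate cylinder $[0,T]\times M$ set
\[
\omega := d\lambda,\qquad \lambda := h\,\alpha_0,\qquad h(t,x) := e^t\bigl(cg(x)\bigr)^{\psi(t)}.
\]

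Using $(d\alpha_0)^n=0$ (since $\dim M=2n-1$), a short calculation gives
\[
\omega^n = n\,h^{n-1}\,h_t\,dt\wedge\alpha_0\wedge(d\alpha_0)^{n-1},\qquad h_t = h\bigl(1+\psi'(t)\log(cg)\bigr),
\]
which is non-vanishing because $cg>1$ and $\psi'\geq 0$ together force $1+\psi'\log(cg)>0$. Near $t=0$ the cylinder formula reduces to $\omega=d(e^t\alpha_0)=\omega_0$, matching the collar of $\partial_+X_0$. Near $t=T$ it reduces to $\omega=d(e^t c\alpha_1)$, which under the reparametrization $s=t-T$ becomes $\kappa\,d(e^s\alpha_1)=\kappa\omega_1$ on the collar of $\partial_-X_1$, with $\kappa=ce^T>0$. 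Gluing $X_0$, the cylinder, and $(X_1,\kappa\omega_1)$ along these matching collars produces the desired symplectic cobordism $(X,\omega)$ from $\partial_-X_0$ to $\partial_+X_1$, equal to $\omega_0$ on $X_0$ and to the positive multiple $\kappa\omega_1$ on $X_1$ away from the collar.

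The only real obstacle is keeping the interpolated form non-degenerate: without the rescaling, $\log(cg)$ could be negative and the term $\psi'\log(cg)$ could overwhelm the $1$ in $h_t$, causing $\omega^n$ to vanish somewhere. Taking $c$ large enough reduces the whole matter to the elementary pointwise inequality $1+\psi'(t)\log(cg(x))>0$, and explains why the theorem only asserts that the glued form is a positive multiple of $\omega_1$, rather than $\omega_1$ itself.
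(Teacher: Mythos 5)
Your proof is correct: the Liouville-flow collars, the interpolation $h(t,x)=e^t(cg(x))^{\psi(t)}$ with $c$ chosen so that $cg>1$, and the computation $\omega^n=n\,h^{n-1}h_t\,dt\wedge\alpha_0\wedge(d\alpha_0)^{n-1}$ are all accurate, and this is exactly the standard argument for this well-known gluing result, which the paper states without proof. The only point worth making explicit is that ``preserves the contact structures'' must be read as preserving co-orientations (as the paper's conventions guarantee), since that is what makes $g>0$ and hence the rescaling trick available.
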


\subsubsection{Stein cobordisms}
If $X$ is a complex manifold, we call a function $\phi:X\to \R$ (strictly) plurisubharmonic if $\omega_\phi=-dJ^*(d\phi)$ is a symplectic form on $X$, where $J$ is the integrable almost-complex structure on $X$.  A special type of symplectic cobordism is a Stein cobordism. If $(X,\omega)$ is a symplectic cobordism and $X$ has a complex structure and a plurisubharmonic function $\phi:X\to \R$ for which $\partial_-X$ and $\partial_+X$ are regular level sets and $\omega=\omega_\phi$, then we say $(X,\omega)$ is a {\em Stein cobordism}. A Stein filling is frequently called a {\em Stein domain}. 

\subsubsection{Weinstein cobordisms}
We can slightly weaken the notion of Stein cobordism to Weinstein cobordism. We call a symplectic cobordism $(X,\omega)$ a {\em Weinstein cobordism} if there is a primitive $\lambda$ for $\omega=d\lambda$ and a Morse function $f:X\to \R$ such that $v_\lambda$ is gradient like for $f$, where $v_\lambda$ is the unique vector field satisfying $\iota_{v_\lambda}\omega=\lambda$. Given a contact manifold $(M,\xi)$, where $\xi$ is defined by the $1$--form $\alpha$, then a trivial example of a Weinstein cobordisms is $([a,b]\times M, d(e^t\alpha))$, with the Morse function being projection to $[a,b]$.  Cieliebak and Eliashberg characterized Weinstein cobordisms as follows.
\begin{theorem}[Cieliebak and Eliashberg 2012, \cite{CieliebakEliashberg12}]\label{CE}
Let $X$ be a cobordism of dimension $2n\geq 6$ admitting a Morse function $f:M\to \R$ with critical points of index at most $n$. If there is a non-degenerate $2$--form $\eta$ on $X$ that agrees with a trivial Weinstein structure near $\partial_-X$, then there is a $1$--form $\lambda$ on $X$, such that $(X,d\lambda,f)$ is a Weinstein cobordisms that agrees with the one near $\partial_-X$, and $d\lambda$ is homotopic through non-degenerate $2$--forms to $\eta$. 
\end{theorem}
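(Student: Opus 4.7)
The plan is to follow the strategy of Cieliebak and Eliashberg, which builds the Weinstein structure on $X$ handle by handle from the trivial one near $\partial_-X$. First, I would use the Morse function $f$ together with Smale's rearrangement to arrange the critical points in order of increasing index, producing a handle decomposition of $X$ relative to $\partial_-X$ with all handles of index $k \leq n$. The non-degenerate $2$-form $\eta$, together with a choice of compatible Riemannian metric, determines an almost complex structure $J$ on $X$; this provides the formal data needed to discuss isotropic and Lagrangian submanifolds at each inductive step.

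For a subcritical $k$-handle (with $k < n$), the attaching sphere $S^{k-1}$ sits in the $(2n-1)$-dimensional contact boundary of the previously constructed Weinstein sublevel set, with codimension $2n - k > n$. This is the flexible range where Gromov's h-principle for isotropic embeddings applies: the formal isotropic data coming from $J$ can be homotoped, through formal isotropic embeddings, to a genuine isotropic embedding. The hypothesis $2n \geq 6$ ensures the dimensional requirements of the h-principle are met. Once the attaching sphere is made isotropic, the standard Weinstein $k$-handle can be glued on, extending both the Weinstein structure and (by tracking the homotopy of $2$-forms at each stage) the homotopy class of $\eta$.

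For a critical $n$-handle, the attaching sphere must be Legendrian, and this is the main technical obstacle. Its codimension in the contact boundary of the subcritical part is $n$, which is precisely the critical dimension where Gromov's h-principle for isotropic embeddings breaks down. The key idea, which is the heart of the argument, is to modify the Weinstein structure on the subcritical sublevel set so that the formally Lagrangian core disk of the $n$-handle (whose existence is guaranteed by the $J$-data) becomes genuinely Lagrangian near its boundary, forcing the attaching sphere to be Legendrian. This uses Eliashberg's earlier techniques from the topological characterization of Stein manifolds, combined with h-principles for totally real immersions, and again relies crucially on $2n \geq 6$ to perform the necessary Whitney-type tricks and isotopies in the subcritical filling.

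The hard part is exactly this critical step: subcritical handles are handled by well-understood flexibility results, whereas critical handles require delicate deformations of the symplectic form on the filling in order to promote a formal Lagrangian disk to a genuine one. Once every handle has been attached in this fashion, the constructed primitive $\lambda$ has $d\lambda$ homotopic to $\eta$ through non-degenerate $2$-forms by concatenating the homotopies produced at each stage, and the construction agrees with the given trivial Weinstein structure near $\partial_-X$ because the induction was initialized there.
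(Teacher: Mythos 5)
This statement is quoted verbatim from Cieliebak and Eliashberg's book \cite{CieliebakEliashberg12}; the paper offers no proof of it, so there is nothing internal to compare your argument against. Judged on its own terms, your outline is a faithful summary of the known proof strategy: self-indexing the Morse function, extracting formal isotropic data from a $J$ compatible with $\eta$, attaching subcritical handles via Gromov's $h$-principle for subcritical isotropic embeddings, and isolating the index-$n$ handles as the genuine difficulty. Two small corrections: the hypothesis $2n\geq 6$ is not what makes the subcritical $h$-principle work (that only needs the attaching sphere to have dimension $<n-1$); it is needed in the critical case, both to rule out the four-dimensional Thurston--Bennequin-type obstruction and to run general-position and Whitney-trick arguments in codimension $\geq 3$.

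The genuine gap is exactly where you place the ``hard part.'' Saying that the critical case ``uses Eliashberg's earlier techniques \ldots combined with $h$-principles for totally real immersions'' names the toolbox without running the argument. What is actually needed, and missing, is: (i) an argument that the formal Legendrian attaching data for an $n$-handle can be realized by a genuine Legendrian \emph{embedding} of $S^{n-1}$ in the correct smooth isotopy class --- this uses the $h$-principle for Legendrian immersions, the general-position fact that for $n-1\geq 2$ a generic Legendrian immersion of an $(n-1)$-manifold into a $(2n-1)$-manifold has no double points, and a codimension-$\geq 3$ isotopy argument to match the prescribed attaching sphere; and (ii) a verification that once the attaching sphere is Legendrian with the induced framing, the standard Weinstein $n$-handle extends the structure and the resulting $d\lambda$ stays in the prescribed homotopy class of non-degenerate $2$-forms. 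Since this is a book-length theorem, a full proof is not a reasonable expectation here, but as written your proposal is an accurate road map rather than a proof: the critical-index step, which you correctly identify as the heart of the matter, is asserted rather than established.
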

\begin{remark}\label{EliashRem}
Eliashberg's characterization of Stein manifolds \cite{Eliashberg90a} implies that if one uses Theorem~\ref{glue} to glue a Stein filling of a contact manifold to a Weinstein cobordism, the resulting symplectic manifold is also a Stein domain. 
\end{remark}

It is well-known (and clear from the previous theorem) that a $2n$--dimensional Weinstein cobordism has a handle decomposition with handles of index at most $n$. A general Weinstein cobordism can be built by attaching Weinstein handles to the standard symplectic $2n$--ball or to a piece of the symplectization of a contact manifold, see \cite{CieliebakEliashberg12, Weinstein91}. A {\em Weinstein handle} of index~$k$ is $h^k=D^k\times D^{2n-k}$ with a symplectic structure so that $\partial_-h^k=(\partial D^k)\times D^{2n-k}$ is concave, and $\partial_+h^k=D^k\times \partial D^{2n-k}$ is convex. Moreover, $D^k\times \{0\}$ is isotropic and its intersection with $\partial_-h^k$ is an isotropic $S^{k-1}$ in the contact structure induced on $\partial_-h^k$. Similarly, $\{0\}\times D^{2n-k}$ is coisotropic and its intersection with $\partial_+h^k$ is a coisotropic $S^{2n-k-1}$ in the contact structure induced on $\partial_+h^k$.

Thus, the attaching sphere of a Weinstein $k$--handle is an isotropic $S^{k-1}$. Recall that to attach a handle one needs a trivialization of the normal bundle, so we consider the normal bundle of an isotropic submanifold of a contact manifold. 

Recall that a submanifold $S$ of a contact manifold $(M,\xi)$ is called {\em isotropic} if $T_xS\subset \xi_x$ for all $x\in S$. If $\alpha$ is a contact form defining $\xi$ then one may easily check that $T_xS$ is an isotropic subspace of $\xi$ with the symplectic structure $d\alpha$. In other words the symplectic orthogonal $TS^{\perp_\omega}$ of $TS$ in $\xi$ contains $TS$. The {\em conformal symplectic normal bundle} of $S$ is defined to be the quotient bundle 
\[
CSN(S)=(TS^{\perp_\omega})/TS.
\]
There is a natural symplectic form induced on each fiber of $CSN(S)$ by $d\alpha$ (as one changes $\alpha$ the symplectic structure changes by a conformal factor). It is easy to see that the tangent bundle of $M$ along $S$ splits as
\[
TM=\xi \oplus TM/\xi = (CSN(S)\oplus TS \oplus (\xi/ TS^{\perp_\omega}))\oplus \underline{\R}.
\]
A standard argument shows that $\xi/TS^{\perp_\omega}$ is isomorphic to $T^*S$ and so the normal bundle to $S$ in $M$ is 
\[
\nu(S)=CSN(S)\oplus T^*S\oplus \underline{\R}.
\]
Restricting to the case of isotropic spheres $S$, we notice that $T^*S\oplus\underline{\R}$ is trivial, and hence a trivialization of the normal bundle to $S$ is given by a (conformally) symplectic trivialization of $CSN(S)$. 

\begin{lemma}\label{isogerm}
The germ of a contact structure along an isotropic sphere is determined up to contactomorphism by the isomorphism class of its conformal symplectic normal bundle. 
\end{lemma}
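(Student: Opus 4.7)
The approach is a relative Moser / Weinstein neighborhood argument. Suppose we are given two isotropic spheres $S_i\subset (M_i,\xi_i)$ with $i=0,1$, together with a conformal symplectic bundle isomorphism $\Phi\co CSN(S_0)\to CSN(S_1)$ covering a diffeomorphism $f\co S_0\to S_1$. Fix contact forms $\alpha_i$ defining $\xi_i$. The first step is to upgrade $\Phi$ and $f$ to an isomorphism of the full ambient tangent bundles along the spheres. Using the splitting
\[
TM_i|_{S_i}=CSN(S_i)\oplus TS_i\oplus T^*S_i\oplus \underline{\R}
\]
established in the excerpt, together with $df$ on the $TS$ summand, its inverse transpose on the $T^*S$ summand, the canonical Reeb trivialization of $\underline{\R}$, and $\Phi$ on $CSN$, one produces a bundle map $\widetilde\Phi\co TM_0|_{S_0}\to TM_1|_{S_1}$ that sends $\xi_0|_{S_0}$ to $\xi_1|_{S_1}$ and respects the conformal symplectic data on these hyperplane distributions.

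Next I would realize $\widetilde\Phi$ as a diffeomorphism $\phi$ between tubular neighborhoods of $S_0$ and $S_1$ with $\phi|_{S_0}=f$ and $d\phi|_{S_0}=\widetilde\Phi$, for instance by using the exponential maps of adapted Riemannian metrics. After pulling $\alpha_1$ back by $\phi$, the problem reduces to a local statement on a neighborhood $U$ of $S_0$: the two contact forms $\alpha_0$ and $\beta:=\phi^*\alpha_1$ on $U$ have kernels and conformal symplectic data that agree along $S_0$, and we must produce a contactomorphism of a smaller neighborhood, fixing $S_0$ pointwise, that carries $\ker\alpha_0$ to $\ker\beta$. After multiplying $\beta$ by a positive function so that $\alpha_0=\beta$ pointwise on $S_0$, I would apply Moser's trick to the convex combination $\alpha_t=(1-t)\alpha_0+t\beta$: the contact condition $\alpha_t\wedge (d\alpha_t)^{n-1}\neq 0$ holds along $S_0$ for every $t\in[0,1]$ and hence on a $t$--independent neighborhood of $S_0$, and solving $\iota_{X_t}\,d\alpha_t+\dot\alpha_t=g_t\,\alpha_t$ with $X_t\in\ker\alpha_t$ yields a time-dependent vector field whose flow at $t=1$ provides the desired contactomorphism.

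The main technical point is ensuring that $X_t$ vanishes along $S_0$, so that the flow fixes the sphere and is defined for all $t\in[0,1]$ on some uniform neighborhood. This is precisely where the $CSN$ identification is used: combined with the splitting above, it guarantees that, after the positive rescaling, the one-jets of $\alpha_0$ and $\beta$ along $S_0$ coincide, so that $\dot\alpha_t=\beta-\alpha_0$ vanishes to first order on $S_0$. A careful inspection of the Moser equation then forces $X_t|_{S_0}=0$, and the argument concludes by the standard integration procedure. The genuinely nontrivial ingredient is this first-order matching of forms; everything else is bookkeeping that tracks how each summand of $TM|_S$ contributes to the one-jet of a contact form along an isotropic submanifold.
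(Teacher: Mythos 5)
Your argument is correct and is essentially the proof the paper relies on: the paper establishes this lemma by citing Theorem~2.5.8 of Geiges, whose proof is exactly this Weinstein--Moser neighborhood argument (extend the $CSN$ isomorphism to a bundle map of $TM|_S$ respecting $\xi$, $d\alpha|_{\xi}$, and the Reeb direction, realize it by a tubular-neighborhood diffeomorphism, and run Gray--Moser on the linear interpolation $\alpha_t$). One small imprecision: the construction yields only pointwise agreement of $\alpha_0$ and $\beta$, and of $d\alpha_0$ and $d\beta$, on $T_pM$ for $p\in S_0$ --- not agreement of one-jets --- but this weaker statement already gives $\dot\alpha_t|_{S_0}=0$ and the contactness of $\alpha_t$ near $S_0$ for all $t$, hence $X_t|_{S_0}=0$, which is all the Moser step requires.
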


See Theorem~2.5.8 in \cite{Geiges08}, for example. Thus given an isotropic sphere $S^{k-1}$ in the convex boundary of a symplectic manifold with a choice of trivialization of its conformal symplectic normal bundle, one can attach a Weinstein $k$--handle by identifying a neighborhood of the isotropic sphere with $\partial_-h^k$. See \cite{CieliebakEliashberg12, Weinstein91}.

\begin{remark}\label{handlemodle}
For later use, we now recall a model for a Weinstein $k$--handle. We begin by thinking of $\R^{2n}$ as the product of $T^*\R^k$ with the symplectic structure $\sum_{i=1}^k dq_i\wedge dp_i$ and $\R^{2(n-k)}$ with the symplectic structure $\sum_{i=1}^{n-k} dx_i\wedge dy_i$. Now for any $a, b > 0$, let $D_a$ be the disk of radius $a$ in the $p_i$ subspace and $D_b$ the disk of radius $b$ in the $q_i,x_i,y_i$ subspace. Then set $H_{a,b}= D_a\times D_b$. This is a model for 
the Weinstein $k$--handle $h^k$ with attaching region $\partial_-h^k = (\partial D_a)\times D_b$. We have the expanding vector field 
\[
v=\sum_{i=1}^k -p_i\frac{\partial}{\partial p_i} + 2q_i\frac{\partial}{\partial q_i} + \frac 12 \sum_{i=1}^{n-k}\left( x_i\frac{\partial}{\partial x_i} + y_i\frac{\partial}{\partial y_i}\right)
\]
that can be used to attach the handle as in \cite{Weinstein91}. We notice that if $\omega$ is the symplectic structure on $\R^{2n}$ above, then $\iota_v\omega$ restricted to $\partial_-h^k = (\partial D_a)\times D_b$ and $\partial_+h^k = D_a\times \partial D_b$ is a contact form. 
\end{remark}

\subsubsection{Almost-complex cobordisms}
We now turn to a weaker notion of cobordism. We first recall that an {\em almost-contact structure} on a manifold $M$ is a hyperplane field $\eta$ together with a non-degenerate $2$--form $\omega$ on $\eta$. Recall that the existence of $\omega$ on $\eta$ is equivalent to the existence of an almost-complex structure on $\eta$. 

Now recall that if $J$ is an almost-complex structure on $X$ and $\Sigma$ is a hypersurface then we can consider the set $\xi$ of complex tangencies to $\Sigma$. This will be a hyperplane field on $\Sigma$. If we orient $\Sigma$ by a transverse vector field $v$ along $\Sigma$ such that $Jv$ is tangent to $\Sigma$ then we can choose a $1$--form $\alpha$ such that $\xi=\ker\alpha$ and $\alpha(Jv)>0$. We say that $\Sigma$ is {\em $J$--convex}, respectively {\em $J$--concave}, if $d\alpha(w,Jw)>0$, respectively $d\alpha(w,Jw)<0$, for all $w\not=0$ in $\xi$. Notice that $\xi$ and $J$ give $\Sigma$ an almost-contact structure. If $X$ is a cobordism from $\partial_-X$ to $\partial_+X$ and $\partial_-X$ is $J$--convex with respect to an inward pointing normal and $\partial_+X$ is $J$--convex with respect to an outward pointing vector field then we say that $(X,J)$ is an almost-complex cobordism from the almost-contact manifold $\partial_-X$ to $\partial_+X$. 

\begin{theorem}[Bowden, Crowley, and Stipsicz 2015, \cite{BowdenCrowleyStipsicz15}]\label{BCS}
For any $2n-1\geq 5$, there is a closed almost-contact manifold $(M_{max},\eta_{max},\omega_{max})$ such that for any other closed oriented almost-contact manifold $(M,\eta,\omega)$ there is an almost-complex cobordism from $M$ to $M_{\max}$ with handles of index less than or equal to $n$ and so that the almost-contact structures are induced by the almost-complex structure. 
\end{theorem}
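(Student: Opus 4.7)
The plan is to construct $(M_{\max}, \eta_{\max}, \omega_{\max})$ as a universal target and then verify that any closed oriented almost-contact $(2n-1)$-manifold admits an almost-complex cobordism to it using only handles of index at most $n$.

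The core technical tool is an obstruction-theoretic upgrade of smooth surgeries to almost-complex handle attachments. Given a smoothly embedded sphere $S^{k-1} \subset M$ with $k \le n$ and an appropriate framing of its normal bundle, the smooth $k$-handle attached to $M \times [0,1]$ along $S^{k-1} \subset M \times \{1\}$ can be upgraded to an almost-complex cobordism inducing the prescribed almost-contact data on both ends, provided the obstructions to reducing the tangent classifying map from $BSO(2n)$ to $BU(n)$ vanish. These obstructions live in $H^{j+1}(h^k, \partial_-h^k; \pi_j(SO(2n)/U(n)))$, and since $SO(2n)/U(n)$ has vanishing homotopy through a range determined by $n$, they can be arranged to vanish for $k \le n$ by a suitable framing choice on the attaching sphere. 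Hence every smooth surgery of index $\le n-1$ on $M$ lifts to an almost-complex handle attachment, with the new positive boundary inheriting its almost-contact structure from the complex tangencies.

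With this technology in place, I would proceed in two stages. First, use handles of index $\le n-1$ to make $M$ highly connected: classical surgery theory guarantees that generators of $\pi_1, \ldots, \pi_{n-1}$ can be killed by surgeries on embedded spheres of dimension $<n-1$, and the obstruction argument above ensures each such surgery upgrades to an almost-complex handle. This reduces to the case of $(n-1)$-connected almost-contact manifolds. Second, define $M_{\max}$ explicitly as a single $(n-1)$-connected almost-contact manifold — for instance, $\#_N (S^{n-1} \times S^n)$ for sufficiently large $N$ with an almost-contact structure realizing all possible Chern-type data, or equivalently the boundary of an almost-complex $2n$-manifold built by attaching many index-$n$ handles to $D^{2n}$ with varied framings. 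Then use index-$n$ handles to connect any $(n-1)$-connected almost-contact manifold to $M_{\max}$.

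The main obstacle is the second stage: confirming that a single fixed $M_{\max}$ can absorb every $(n-1)$-connected almost-contact $(2n-1)$-manifold via index-$n$ handle attachments alone, together with the almost-complex lift. This requires analyzing the refined cobordism invariants of almost-contact structures (Chern numbers of the rank-$(n-1)$ complex bundle $\eta$, together with the smooth oriented bordism class of $M$) in dimension $2n-1$, using the finiteness of $\Omega^{SO}_{2n-1}$ and of the relevant Chern data modulo cobordism, and showing that all such invariants can be realized by index-$n$ surgeries on $M_{\max}$. The delicate point is the framing bookkeeping in the stable range for $BU(n-1)$ during the index-$n$ step, which is the technical heart of the argument and is the content of \cite{BowdenCrowleyStipsicz15}.
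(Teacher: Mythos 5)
This statement is not proved in the paper at all: it is quoted verbatim from Bowden--Crowley--Stipsicz \cite{BowdenCrowleyStipsicz15} and used as a black box (its only role here is to feed Corollary~\ref{partialcobord}). So there is no internal proof to compare against; the only fair comparison is with the strategy of the cited paper, which your sketch does roughly track: upgrade smooth handle attachments below the middle dimension to almost-complex ones via obstruction theory for the fibration with fibre $SO(2n)/U(n)$, reduce to highly connected almost-contact manifolds, and then absorb everything into a fixed target with middle-index handles.

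As a proof, however, your proposal has a genuine gap, and it sits exactly where the theorem lives. The entire content of the statement is the existence of a \emph{single} $(M_{\max},\eta_{\max},\omega_{\max})$ that receives an almost-complex cobordism, with handles of index at most $n$, from \emph{every} closed almost-contact $(2n-1)$--manifold simultaneously -- i.e.\ that the relation is directed with a common upper bound. Your second stage asserts this for $(n-1)$--connected manifolds but then defers the argument (``the technical heart \ldots is the content of \cite{BowdenCrowleyStipsicz15}''), which is circular: you are citing the theorem to prove the theorem. Finiteness of $\Omega^{SO}_{2n-1}$ and of the Chern-type data does not by itself produce a maximal element reachable by index-$n$ handles with compatible almost-complex structures; one must actually show that the almost-contact bordism classes can all be merged by middle-dimensional surgeries into one fixed manifold, and that is the step you would need to supply. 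A smaller slip: to kill $\pi_{n-1}$ of a $(2n-1)$--manifold you surger embedded $(n-1)$--spheres, i.e.\ attach index-$n$ handles, so your first stage cannot achieve $(n-1)$--connectivity using only handles of index $\le n-1$ as written (this is harmless for the theorem, since index-$n$ handles are permitted, but the bookkeeping between your two stages needs to be corrected).
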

We have the immediate corollary of this result that we will upgrade in Proposition~\ref{cobordtostein}.
\begin{cor}\label{partialcobord}
Given any contact manifold $(M,\xi)$ of dimension at least 5, there is a Weinstein cobordism from $(M,\xi)$ to a contact manifold $(M',\xi')$ such that the almost-contact homotopy class of $\xi'$ contains a Stein fillable contact structure. 
\end{cor}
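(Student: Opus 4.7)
The plan is to combine Theorem~\ref{BCS} with Theorem~\ref{CE} in a straightforward way, using $(S^{2n-1},\xi_{std})$ as a reference point to verify Stein fillability of the almost-contact class at the top.

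Write $2n-1$ for the dimension of $M$, so $2n\geq 6$. Consider the almost-contact structure $(\xi,d\alpha|_\xi)$ underlying $(M,\xi)$. By Theorem~\ref{BCS}, there is an almost-complex cobordism $(X,J)$ from $(M,\xi)$ to the universal almost-contact manifold $(M_{max},\eta_{max},\omega_{max})$, admitting a Morse function with critical points of index at most $n$. Near $\partial_-X=M$, a collar of the form $([a,b]\times M, d(e^t\alpha))$ gives a (trivial) Weinstein structure inducing an almost-complex structure homotopic to $J$ through non-degenerate $2$-forms on $TX|_{\text{collar}}$. We may therefore apply Theorem~\ref{CE} to produce a $1$-form $\lambda$ on $X$ making $(X,d\lambda,f)$ a Weinstein cobordism agreeing with the trivial Weinstein collar near $\partial_-X$. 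Its upper boundary is a contact manifold $(M',\xi')$ whose underlying almost-contact structure is homotopic to $(\eta_{max},\omega_{max})$, since $d\lambda$ is homotopic to the original non-degenerate form through non-degenerate $2$-forms on $X$.

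It remains to exhibit a Stein fillable contact structure on $M_{max}$ in the almost-contact class $(\eta_{max},\omega_{max})$. For this, apply the same argument with $(M,\xi)$ replaced by the standard contact sphere $(S^{2n-1},\xi_{std})$: Theorem~\ref{BCS} gives an almost-complex cobordism from $S^{2n-1}$ (with its standard almost-contact structure) to $(M_{max},\eta_{max},\omega_{max})$ with handles of index at most $n$, and Theorem~\ref{CE} upgrades it to a Weinstein cobordism $(W,d\mu)$ from $(S^{2n-1},\xi_{std})$ to a contact manifold $(M_{max},\xi_{max})$ whose underlying almost-contact structure is homotopic to $(\eta_{max},\omega_{max})$. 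Since $(S^{2n-1},\xi_{std})$ is the convex boundary of the standard Stein ball $B^{2n}$, Theorem~\ref{glue} allows us to glue $W$ on top of $B^{2n}$, and by Remark~\ref{EliashRem} the result is a Stein domain with convex boundary $(M_{max},\xi_{max})$. Hence $\xi_{max}$ is Stein fillable and homotopic as an almost-contact structure to $\eta_{max}$, and therefore to the almost-contact structure underlying $\xi'$. This produces the Stein fillable contact structure in the almost-contact class of $\xi'$ demanded by the corollary.

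The main step where one must be a little careful is the hypothesis check for Theorem~\ref{CE}: one needs the almost-complex structure supplied by Theorem~\ref{BCS} to agree with a trivial Weinstein structure near $\partial_-X$ up to homotopy through non-degenerate $2$-forms. This is automatic because the given almost-contact structure on $\partial_-X$ is the one underlying $\xi$, so the almost-complex structure on $TX$ restricted to a collar is homotopic (through non-degenerate $2$-forms) to the standard one compatible with $d(e^t\alpha)$; the relative version of Theorem~\ref{CE} then lets us keep this Weinstein collar fixed while spreading $\lambda$ over the rest of $X$. With that in place, the two applications of the BCS-plus-CE machinery described above combine to give both the Weinstein cobordism and the reference Stein fillable contact structure in the target almost-contact class.
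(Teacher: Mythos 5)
Your proof is correct and follows essentially the same route as the paper: apply Theorem~\ref{BCS} followed by Theorem~\ref{CE} to get the Weinstein cobordism to $(M_{max},\xi')$, then run the same machinery starting from a Stein fillable contact manifold (the paper uses an arbitrary one, you use $(S^{2n-1},\xi_{std})$ capped by the standard ball) and invoke Remark~\ref{EliashRem} to produce a Stein fillable structure in the almost-contact class of $\eta_{max}$. The extra care you take with the collar hypothesis of Theorem~\ref{CE} is a reasonable elaboration of what the paper leaves implicit.
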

\begin{proof}
Given $(M,\xi)$, Theorem~\ref{BCS} gives an almost-complex cobordism $(X,J)$ from $(M, \xi)$ to $(M_{max},\eta_{max},\omega_{max})$. Now Theorem~\ref{CE} says that $J$ may be homotoped so that $X$ is a Stein (and hence Weinstein) cobordism from $(M,\xi)$ to some contact manifold $(M_{max},\xi')$. Now notice that given any Stein fillable contact manifold $(\widehat{M},\widehat{\xi})$ we can similarly get a Weinstein cobordism to some contact manifold $(M_{max}, \xi'')$. By construction, see Remark~\ref{EliashRem}, $(M_{max},\xi'')$ is Stein fillable and in the same almost-contact homotopy class of $(M_{max},\xi')$.
\end{proof}

\subsection{Symplectic caps}
Part of Theorem~3.2 in \cite{LiscaMatic97} says that a Stein domain can always be embedded in a closed symplectic manifold. 
\begin{theorem}[Lisca and Mati\'c 1997, \cite{LiscaMatic97}]\label{LM}
If $X$ is a Stein domain with plurisubharmonic function $\phi:X\to \R$, then there is a holomorphic embedding of $X$ into a smooth projective variety such that the K\"ahler form on the variety pulls back to the standard symplectic form $\omega_\phi= -dJ^*(d\phi)$ on $X$, where $J$ is the integrable almost-complex structure on $X$. 
\end{theorem}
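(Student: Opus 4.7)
The plan is to realize $X$ as a closed complex submanifold of a smooth projective variety $V$, and then to choose a K\"ahler form on $V$ whose restriction to $X$ is exactly $\omega_\phi$.

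First, I would use the Bishop--Narasimhan embedding theorem to realize the ambient Stein manifold of $X$ (or $X$ itself, after extending $\phi$ slightly past the boundary so that $X$ sits inside a larger open Stein manifold) as a closed complex submanifold of $\C^N$ for some $N$. Including $\C^N$ as the standard affine chart of $\mathbb{CP}^N$ then gives a holomorphic embedding of $X$ into $\mathbb{CP}^N$, but in general the closure of the image is not algebraic. To remedy this I would approximate the embedding by a polynomial map via Oka--Weil-type theorems and their algebraic refinements (Forstneri\v{c}, Lempert, Demailly), so that the resulting map extends to a rational one whose image is Zariski closed in $\mathbb{CP}^N$. Taking the projective closure and resolving singularities produces a smooth projective variety $V$ into which $X$ embeds holomorphically as an open subset of the smooth locus.

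Second, I would arrange that a chosen K\"ahler form on $V$ pulls back to $\omega_\phi$ on $X$. Let $\eta$ be the restriction to $V$ of the Fubini--Study form (or, equivalently, the curvature of a positive line bundle on $V$). Both $\omega_\phi$ and $\eta|_X$ are exact K\"ahler forms on the Stein manifold $X$, so the global $\partial\bar\partial$-lemma on Stein manifolds supplies a smooth function $u$ on $X$ with $\omega_\phi - \eta|_X = i\partial\bar\partial u$. Replacing $\eta$ by a very small positive multiple $\varepsilon\eta$ and introducing a cutoff $\chi$ supported in a small tubular neighborhood of $X$ in $V$, I would set $\Omega = \varepsilon\eta + i\partial\bar\partial(\chi u)$; this form is closed, agrees with $\omega_\phi$ on $X$ (where $\chi \equiv 1$), and remains K\"ahler globally provided $\varepsilon$ is small enough that positivity is not destroyed in the region where $\chi$ fails to be constant.

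The main technical obstacle is the algebraization in the first step: upgrading a holomorphic embedding $X \hookrightarrow \C^N \subset \mathbb{CP}^N$ to one whose closure is a genuine algebraic subvariety. A generic proper holomorphic embedding of a Stein manifold is transcendental, and controlling the image at infinity requires exactly the kind of approximation theorem that guarantees holomorphic maps of Stein domains can be approximated by algebraic ones in a strong enough sense that the projective closure is algebraic. The positivity issue for $\Omega$ in the second step, and the need to realize $X$ as smoothly embedded in $V$ rather than merely in its singular locus, are secondary concerns handled by rescaling and by resolution of singularities, respectively.
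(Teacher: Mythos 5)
First, a point of order: the paper does not prove this statement. It is imported verbatim as part of Theorem~3.2 of Lisca and Mati\'c \cite{LiscaMatic97}, so there is no internal proof to compare against; what follows measures your sketch against the known argument.

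Your first step is the right one, and you correctly identify where the real content lies: the algebraization theorem (Stout; Demailly--Lempert--Shiffman) that a relatively compact strictly pseudoconvex domain in a Stein manifold is biholomorphic to a domain in a smooth affine algebraic variety, after which one passes to the projective closure in $\mathbb{CP}^N$ and resolves the singularities at infinity (these lie off the compact set $X$, so $X$ survives into the smooth model). One caution about your language, though: $X$ ends up as a \emph{codimension-zero} domain in the projective variety $V$, not as a ``closed complex submanifold'' with a ``tubular neighborhood'' --- a compact complex manifold with nonempty boundary cannot be a closed positive-codimension complex submanifold of a compact K\"ahler manifold. This is not cosmetic: the function $u$ in your second step must live on an open neighborhood of $\overline{X}$ in $V$, which in the correct picture means a slightly larger strictly pseudoconvex domain in the affine part of $V$, not an extension off a thin submanifold.

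The genuine gap is in the second step. For $\Omega=\varepsilon\eta+i\partial\bar\partial(\chi u)$ to restrict to $\omega_\phi$ on $X$ you need $u=u_\varepsilon$ to solve $i\partial\bar\partial u_\varepsilon=\omega_\phi-\varepsilon\,\eta|_X$; with the $u$ you actually defined (which solves the equation for $\eta$, not $\varepsilon\eta$) you get $\Omega|_X=\omega_\phi-(1-\varepsilon)\eta|_X\neq\omega_\phi$. If you do take $u=u_\varepsilon$, then $u_\varepsilon$ converges as $\varepsilon\to0$ to a fixed nonzero potential $u_0$ of $\omega_\phi$, so the error terms $i\,\partial\chi\wedge\bar\partial u_\varepsilon+i\,\partial u_\varepsilon\wedge\bar\partial\chi+u_\varepsilon\,i\partial\bar\partial\chi$ on the annulus where $d\chi\neq0$ are bounded away from zero, while the positive term $\varepsilon\eta$ tends to zero there. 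So shrinking $\varepsilon$ makes positivity \emph{harder}, not easier; your claim is exactly backwards, and taking $\varepsilon$ large does not obviously help either, since $u_\varepsilon$ then grows linearly in $\varepsilon$. Cut-off functions are the wrong tool for gluing K\"ahler forms. The standard repair is to glue strictly plurisubharmonic \emph{potentials} rather than forms: write the Hodge form on the affine part $V\setminus D$ as $i\partial\bar\partial\psi$, normalize $\phi$ so that $i\partial\bar\partial\phi=\omega_\phi$, and use a regularized maximum of $\phi+b$ and a suitable modification of $\psi$ (adding constants and composing with convex increasing functions of $\phi$) so that the first potential wins on $X$ and the second wins near the boundary of the larger domain; positivity is then automatic and the resulting form extends over $V$ by the Hodge form. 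This gluing, or an arrangement of the algebraic embedding that makes $\phi$ compatible with a global K\"ahler potential from the start, is precisely where the published proof does its work, and it is missing from your sketch.
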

An immediate corollary of this is the existence of symplectic caps for Stein fillable contact structures.
\begin{cor}\label{steincap}
Let $(M^{2n-1},\xi)$ be a Stein fillable contact structure. There is a symplectic manifold $(W^{2n},\omega)$ such that $(M,\xi)$ is the concave boundary of $(W, \omega)$. 
\end{cor}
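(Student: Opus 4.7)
The plan is to combine the given Stein fillability hypothesis with the Lisca--Mati\'c embedding theorem in a straightforward way. Let $(X,J,\phi)$ denote a Stein filling of $(M,\xi)$, so that $\partial X=M$, the symplectic form on $X$ is $\omega_\phi=-dJ^*(d\phi)$, and $\xi$ is induced on $M$ as the set of complex tangencies.

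First I would invoke Theorem~\ref{LM} to produce a holomorphic embedding $\iota\co X \hookrightarrow V$ into a smooth projective variety $V$ whose K\"ahler form $\Omega$ pulls back to $\omega_\phi$. Define
\[
W = V \setminus \operatorname{int}(\iota(X)),
\]
equipped with the restriction of $\Omega$. Then $W$ is a closed symplectic manifold with boundary $\partial W = M$ (with the opposite orientation from $\partial X$), and $\iota$ identifies a collar of $M$ in $W$ with a collar of $M$ in $X$ symplectically.

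The only thing to check is that $(W,\Omega|_W)$ has $(M,\xi)$ as its concave boundary. Since $\omega_\phi = d\lambda$ for the Liouville form $\lambda = -J^*(d\phi)$, the associated Liouville vector field $v_\lambda$ (which here equals the gradient of $\phi$ with respect to the K\"ahler metric) satisfies $\mathcal{L}_{v_\lambda}\omega_\phi = \omega_\phi$ and points outward along $M = \partial X$, making $M$ the convex boundary of $X$ with $\xi=\ker(\iota_{v_\lambda}\omega_\phi)|_M$. Under the identification of collars, this same vector field extends to a neighborhood of $M$ in $W$, where it now points \emph{into} $W$, so $M$ is the concave boundary of $(W,\Omega|_W)$ and the induced contact structure is exactly $\xi$.

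There is no real obstacle here: the only nontrivial input is Theorem~\ref{LM}, and the identification of the contact structure on the concave side with $\xi$ is automatic because the Liouville vector field is the same on both sides of $M$ within the collar; its sign relative to the ambient manifold simply flips when we pass from $X$ to $W = V\setminus\operatorname{int}(\iota(X))$.
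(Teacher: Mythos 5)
Your proposal is correct and follows exactly the paper's argument: apply Theorem~\ref{LM} to embed the Stein filling $X$ holomorphically into a projective variety and take $W$ to be the closure of the complement with the restricted K\"ahler form. The extra verification you give --- that the Liouville vector field $-J^*(d\phi)$ points out of $X$ and hence into $W$ along the shared collar, so $M$ is concave in $W$ with the same induced contact structure $\xi$ --- is precisely what the paper leaves implicit in the word ``clearly.''
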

\begin{proof}
Let $X$ be the Stein filling of $(M,\xi)$ and let $(Y,\omega)$ be the smooth projective variety for which there is a holomorphic embedding $f:X\to Y$ as in Theorem~\ref{LM}. Clearly $W=\overline{Y-f(X)}$ with the symplectic form $\omega|_W$ is the desired symplectic cap for $(M,\xi)$.  
\end{proof}

\subsection{Overtwisted contact structures}
Having an almost-contact structure is clearly a necessary condition for a manifold to admit a contact structure. In 2015, Borman, Eliashberg, and Murphy actually showed that it was also sufficient, \cite{BormanEliashbergMurphy15}. In particular, in that paper they introduced the notion of an overtwisted contact structure in all odd dimensions. Their definition is a bit technical so we do not recall it here, but merely state their main theorem. 

\begin{theorem}[Borman, Eliashberg, and Murphy 2015, \cite{BormanEliashbergMurphy15}]\label{ot}
Given any almost-contact structure $(\eta,\omega)$ on $M$, there is a homotopy through almost-contact structure from $(\eta, \omega)$ to an overtwisted contact structure. Moreover, any two overtwisted contact structures that are homotopic through almost-contact structures are isotopic through contact structures. 
\end{theorem}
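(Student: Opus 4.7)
The plan is to establish both statements via a parametric h-principle for contact structures relative to a standard ``overtwisted disk'' in dimension $2n-1$, following the outline of Eliashberg's 3-dimensional proof but replacing his Lutz-twist and disk-filling techniques with a more flexible framework of contact shells (piecewise smooth cylinders with prescribed contact germs on their boundaries).

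The first and most subtle step is to define the right local model: a specific piecewise smooth disk $D_{ot}^{2n-1} \subset M$ equipped with a prescribed contact germ on a neighborhood, chosen so that (a) the germ on the boundary sphere bounds a canonical contact structure on the enclosed ball, and (b) any contact structure containing this germ can be freely modified outside the disk. When $2n-1 = 3$ this must reduce to Eliashberg's classical overtwisted disk. With $D_{ot}$ fixed, the core technical result to prove is the following h-principle: the natural map from the space of contact structures on $M$ containing $D_{ot}$ to the space of almost-contact structures containing the corresponding formal object is a weak homotopy equivalence. I would prove this by Gromov-style convex integration and holonomic approximation, using $D_{ot}$ as a ``wastebasket'' into which the obstructions to extending a contact germ across $M$ can be collected and absorbed.

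Granted the h-principle, both halves of the theorem follow. For existence, start with any almost-contact structure $(\eta,\omega)$, first homotope it through almost-contact structures so as to insert a formal $D_{ot}$ in a small ball (this is an easy local modification, since the formal overtwisted disk imposes no topological obstruction), and then invoke the h-principle to convert the result to a genuine contact structure. For uniqueness, given two overtwisted contact structures $\xi_0,\xi_1$ in the same almost-contact class, one arranges that both contain a common overtwisted disk $D_{ot}$ (using existence for one-parameter families), and then applies the parametric h-principle to the path of almost-contact structures joining them to rectify it to a path of genuine contact structures relative to $D_{ot}$.

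The main obstacle is, unambiguously, the h-principle itself. Identifying the correct higher-dimensional overtwisted model (so that the flexibility property holds) and proving that its presence allows arbitrary homotopical modifications of the ambient contact germ are what comprise the bulk of the Borman--Eliashberg--Murphy paper; once those are in place, the two conclusions of the theorem are relatively formal consequences of standard h-principle packaging.
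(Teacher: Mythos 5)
This theorem is not proved in the paper at all: it is quoted verbatim from Borman, Eliashberg, and Murphy \cite{BormanEliashbergMurphy15}, and the authors explicitly decline even to recall the higher-dimensional definition of ``overtwisted,'' using the statement purely as a black box in Lemma~\ref{preprelim}. So there is no in-paper argument to compare yours against; the only meaningful question is whether your sketch would reconstitute the proof in \cite{BormanEliashbergMurphy15}. As written, it would not. Everything of mathematical substance is concentrated in the two steps you yourself flag as open --- identifying the correct higher-dimensional overtwisted model and proving the h-principle for contact structures containing it --- and those two steps \emph{are} the content of the cited paper. Reducing the theorem to ``prove the main technical result of the reference'' is a strategy outline, not a proof, and you acknowledge as much in your final paragraph.

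To the extent the outline tracks the real architecture (fix a model germ, prove a relative parametric h-principle, deduce existence by implanting the model and uniqueness from the one-parametric case plus Gray stability), a few details drift in ways worth correcting. The overtwisted disk is codimension one: in a $(2n-1)$-dimensional contact manifold it is a piecewise smooth $(2n-2)$-disk with a prescribed germ, not the $(2n-1)$-dimensional object your superscript suggests. More importantly, the h-principle in \cite{BormanEliashbergMurphy15} is not obtained by convex integration or holonomic approximation in the Gromov sense; the argument runs through contact shells and circular model shells, reduces an arbitrary formal solution to a disjoint union of standard ``saucers,'' and then shows every such saucer is dominated by, and hence fillable from, a single universal overtwisted model. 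That domination and equivalence step is the heart of the matter and has no analogue in the classical convex-integration toolkit, so invoking those techniques by name does not close the gap.
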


Another property of overtwisted contact structures we will need involves loose Legendrian knots as defined by Murphy in \cite{Murphy??}. Again, the precise definition is not important to us here, just the fact that given any Legendrian $\Lambda$ in a contact manifold $(M,\xi)$ of dimension at least 5, once $\Lambda$ has been stabilized near a cusp, it is loose. The main result we need is the following. 

\begin{theorem}[Casals, Murphy, and Presas 2015, \cite{CasalsMurphyPresas}]\label{+1surgot}
Legendrian $(+1)$--surgery on a loose Legendrian sphere in a contact manifold produces an overtwisted contact manifold.
\end{theorem}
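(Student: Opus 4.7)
My plan is to prove this by producing, after surgery, an explicit \emph{plastikstufe} (or equivalently a bordered Legendrian open book) inside the surgered contact manifold, and then invoking the known equivalence between the existence of such an object and overtwistedness in the sense of Borman--Eliashberg--Murphy (Theorem~\ref{ot}). Thus the problem reduces to a local, model-theoretic computation near the Legendrian sphere $\Lambda$ on which surgery is performed.

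First I would unpack the surgery. In dimension $2n-1$, Legendrian $(+1)$--surgery on an $(n-1)$--sphere $\Lambda$ is the inverse of Legendrian $(-1)$--surgery: one removes a standard neighborhood of $\Lambda$ in $(M,\xi)$ and reglues by an identification that plays the role of a coisotropic (inverse Weinstein) surgery. Equivalently, by the material recalled in Section~2, it is surgery on a standard coisotropic sphere whose core is $\Lambda$. In particular, the operation is supported in an arbitrarily small neighborhood $U$ of $\Lambda$, so overtwistedness of the result will follow if the modified contact structure $\xi'$ can be shown to be overtwisted inside the image of $U$.

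The key step is then to exploit looseness. By Murphy's theorem, a loose Legendrian $\Lambda$ admits a neighborhood contactomorphic to an arbitrarily ``large'' standard loose chart, i.e.\ a product of a stabilized Legendrian arc (with a prescribed loose chart) times a Darboux ball in the symplectically transverse directions. Working entirely in this local model, I would compute the effect of the $(+1)$--surgery directly: the surgery identification twists the standard neighborhood of $\Lambda$ in a way that cancels the stabilization coming from the looseness and leaves behind an explicit family of disks with the prescribed germ along their boundary that defines a plastikstufe (with trivial Legendrian core). Concretely, I expect to set up coordinates in which the loose Legendrian has the form $\Lambda_{\mathrm{st}}\times \{0\}\subset J^1(\R^{n-1})$ together with a zig-zag factor, describe $(+1)$--surgery as a contact cut/paste on a standard $D^{n-1}\times D^n$ neighborhood, and then exhibit a parametrized family of disks whose $d\alpha$--area profile realizes the plastikstufe normal form.

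The main obstacle is precisely this last model computation: one must track the contact form through the explicit surgery identification and identify the would-be plastikstufe disks together with a neighborhood of their boundary having the exact normal form required, rather than merely producing a candidate disk family. Once this is done, applying the plastikstufe $\Rightarrow$ overtwistedness theorem (from the work of Casals--Murphy--Presas, or equivalently Huang) finishes the argument, using Theorem~\ref{ot} to certify that $\xi'$ lies in the overtwisted class.
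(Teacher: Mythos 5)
A preliminary remark: the paper does not prove this statement. Theorem~\ref{+1surgot} is imported verbatim from Casals--Murphy--Presas \cite{CasalsMurphyPresas} and used as a black box in the proof of Lemma~\ref{preprelim}, so there is no internal argument to compare yours against; your proposal has to stand on its own as a proof of the cited result, and as such it contains a genuine gap --- in fact two.

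First, the localization step is wrong. You assert that because the surgery is supported in an arbitrarily small neighborhood $U$ of $\Lambda$, it suffices to show that the modified contact structure is overtwisted inside the image of $U$. But the germ of a contact structure along a Legendrian sphere is standard (this is Lemma~\ref{isogerm} with $CSN(\Lambda)$ of rank zero), so arbitrarily small surgered neighborhoods are contactomorphic for \emph{all} Legendrian spheres, loose or not. If such a piece certified overtwistedness, then $(+1)$--surgery on \emph{every} Legendrian sphere would produce an overtwisted manifold; this is false, since by Proposition~\ref{surgeryisweinstein} coisotropic surgery on the Legendrian belt sphere of a Weinstein $n$--handle attached to $(S^{2n-1},\xi_{std})$ returns the tight $(S^{2n-1},\xi_{std})$ (and in dimension $3$, $(+1)$--surgery on the standard unknot gives tight $S^1\times S^2$). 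Looseness is not a germ condition near $\Lambda$: a loose chart has a definite size relative to $\Lambda$ and necessarily extends beyond any prescribed tubular neighborhood, so any correct proof must exploit the interaction between the surgery region and a loose chart of fixed scale. Second, the entire content of the theorem is concentrated in the step you explicitly defer (``I expect to set up coordinates \dots and then exhibit a parametrized family of disks''). Producing the plastikstufe --- with the exact normal form along its boundary, and with whatever smallness and triviality hypotheses the plastikstufe-implies-overtwisted theorem requires --- \emph{is} the theorem, and no part of that computation is carried out; note also that the argument in \cite{CasalsMurphyPresas} does not proceed by exhibiting a plastikstufe in the surgered model but runs through their other equivalent criteria for overtwistedness. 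As written, the proposal is a strategy outline built on an incorrect reduction, not a proof.
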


\begin{remark}
See Remark~\ref{contactpm1} for a definition of Legendrian $(\pm1)$--surgery, where we interpret such surgeries as isotropic and coisotropic surgeries on a Legendrian sphere. 
\end{remark}

\subsection{Convex surfaces}
Recall that a hypersurface $\Sigma$ in a contact manifold $(M^{2n-1},\xi)$ is called {\em convex} if there is a vector field $v$ transverse to $\Sigma$ whose flow preserves $\xi$, {\em i.e.\@} $v$ is a {\em contact vector field}. The {\em dividing set} of $\Sigma$ is
\[
\Gamma_\Sigma= \{x\in \Sigma: v(x)\in \xi_x\}.
\]
This is a co-dimension 1 submanifold of $\Sigma$. We also have the {\em positive and negative regions} of $\Sigma$ which are the following components of $\Sigma\setminus \Gamma_\Sigma$,
\[
\Sigma_\pm = \{x\in \Sigma: v(x) \text{ is $\pm$ transverse to } \xi_x\}.
\]
Here are a couple of simple properties of this set-up.
\begin{enumerate}
\item The dividing set $\Gamma_\Sigma$ is a contact submanifold of $(M,\xi)$.
\item The closures $(\pm 1)^n\overline \Sigma_\pm$ of the positive and negative regions are ideal Liouville domains in $(M,\xi)$ that fill the given contact structure on $\Gamma_\Sigma$. 
\end{enumerate}
To check these properties we first recall that in \cite{Giroux17} (see also \cite{MassotNiederkrugerWendl13}), Giroux defined an {\em ideal Liouville domain} to be a compact manifold with boundary $W$ together with a symplectic form $\omega$ on the interior of $W$ so that $\omega$ has a primitive $\lambda$ for which there is a function $u:W\to \R$ such that $u\lambda$ extends to a $1$--form on $W$ that induces a contact form on $\partial W$. We say an ideal Liouville domain $(W,\omega)$ fills a contact structure $\xi_\partial$ on $\partial W$ if the contact from induced on $\partial W$ by a primitive for $\omega$ as in the definition above, defines $\xi_\partial$. Finally, we say an ideal Liouville domain $(W,\omega)$  embeds in a contact manifold $(M,\xi)$ if $W$ is a submanifold of $M$, so that a contact form for $\xi$ induces the ideal Liouville structure. More specifically, there is a contact form $\alpha$ defining $\xi$ for which $\alpha|_{\partial W}$ is a contact form on $\partial W$ and some rescaling of $\alpha$ (on the interior of $W$) is a primitive for $\omega$ on the interior of $W$.  

Now to verify the properties mentioned above, let $\alpha$ be a contact $1$--form defining $\xi$ on $M$ and set $u=\alpha(v)$. Clearly $\Gamma_\Sigma=\{u^{-1}(0)\}$ (and one may check that 0 is a regular value of $u$) and $\Sigma_\pm =\{\pm u>0\}$. It is also not hard to see that $\alpha$ restricts to $\Gamma_\Sigma$ to give a contact form, see \cite[Section~I.3.B]{Giroux91}. One may check that where $u\not=0$, $u\alpha$ is contact form and $v$ is its Reeb vector field. Thus $d(\alpha/u)$ restricted to $\Sigma_\pm$ gives $(\pm 1)^n\overline \Sigma_\pm$ the structure of an ideal Liouville domain in $(M,\xi)$ that fills the contact structure induced by $\alpha$ on $\Gamma_\Sigma$. 

We now define an {\em abstract convex surface} to be a closed $2n$--dimensional manifold $S$ together with 
\begin{enumerate}
\item a separating hypersurface $\Gamma$, a contact structure $\xi_\Gamma$ on $\Gamma$, and 
\item symplectic forms $\omega_\pm$ on $\omega_\pm$ on $(\pm1)^{n}\overline S_\pm$, were $S-\Gamma=S_+\cup S_-$, that give $(\pm1)^{n}\overline S_\pm$ the structure of ideal Liouville domains
\end{enumerate}
so that there is a global $1$--form $\beta$ on $S$ and function $u:S\to \R$ that has $\Gamma=u^{-1}(0)$ as a regular level set, $\beta$ is a contact form on $\Gamma$ inducing $\xi_\Gamma$, and $d(\beta/u)=\omega_\pm$ on $S_\pm$.  

It is now a simple computation that given an abstract convex surface $(S,\Gamma,\omega_\pm)$ with the $1$--form $\beta$ and function $u$ above, that 
\[
\alpha= u\, dt +\beta
\]
is a contact form on $S\times \R$ in which $\partial/\partial t$ is a contact vector field, $S\times \{t\}$ is a convex surface with $\Gamma_{S\times \{t\}}=\Gamma$ and $\alpha$ inducing $\beta$ on each $S\times\{t\}$. 

Given an abstract convex surface  $(S,\Gamma,\omega_\pm)$, the set of $\beta$ and $u$ as in the definition form a convex set. Thus, a simple application of Gray's stability result implies that the contact structure on $S\times \R$ depends only on $(S,\Gamma,\omega_\pm)$. More specifically, given two choices for $(\beta, u)$ and $(\beta',u')$ as above, there will be a $1$--parameter family of such equations, and Moser's method produces a $t$--invariant vector field whose time--$1$ flow takes $\ker(u\, dt+ \beta)$ to $\ker(u'\, dt + \beta')$ and $S\times \{0\}$ will be taken by this diffeomorphism to the graph of a function $S\to \R$. 

We also recall, {\em cf.\@} \cite[Lemma 2.3.7]{HondaHuangPre}, that there is a contact embedding from $S\times (-b,b)$ to $S\times (-a,a)$ for any positive $a$ and $b$, where the contact structure on both manifolds is given by $u\, dt + \beta$. 

Combining the above observations, we have the following result that will be central to our construction. 
\begin{theorem}\label{convexgluing}
Let $(M_i,\xi_i)$, $i=0,1$, be two contact manifolds and $f$ an orientation reversing diffeomorphism from a boundary component $\Sigma_0$ of $M_0$ to a boundary component $\Sigma_1$ of $M_1$. Suppose $\Sigma_i$ is a convex surface in $(M_i,\xi_i)$. If $f$ is a contactomorphism $\Gamma_{\Sigma_0}$ to $\Gamma_{\Sigma_1}$ and is a symplectomorphism of the ideal Liouville domains $(\Sigma_i)_{\pm}$, then one may remove a collar neighborhood of $\Sigma_1$ from $M_1$ and glue it to $M_0$ so the resulting manifold has a contact structure into which each of the $(M_i,\xi_i)$ contact embed.  \hfill$\Box$
\end{theorem}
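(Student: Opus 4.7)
The plan is to put collar neighborhoods of $\Sigma_0$ and $\Sigma_1$ into the common normal form $u\,dt+\beta$ discussed just before the theorem, use the hypothesis on $f$ together with the Moser argument recalled there to produce a contactomorphism between these collars that extends $f$, and then glue.

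First, since each $\Sigma_i$ is convex in $(M_i,\xi_i)$, the flow of a transverse contact vector field $v_i$ identifies a one-sided collar of $\Sigma_i$ in $M_i$ with a model $\Sigma_i\times I_i$ (where $I_i$ is a half-interval based at $0$) equipped with a contact form $u_i\,dt+\beta_i$, where the pair $(\beta_i,u_i)$ realizes the abstract convex surface structure on $\Sigma_i$. Extending by the same formula, we may regard this collar as a subset of the full model $\Sigma_i\times\R$ with contact structure $\ker(u_i\,dt+\beta_i)$.

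Second, I would use $f$ to match the two models. By the hypotheses on $f$, the pair $(f^*\beta_1,f^*u_1)$ is another valid defining pair for the abstract convex surface $(\Sigma_0,\Gamma_{\Sigma_0},\omega_{\pm,0})$ already realized by $(\beta_0,u_0)$. Since the set of such pairs is convex, the $t$--invariant Moser vector field constructed before the theorem produces a diffeomorphism of $\Sigma_0\times(-\delta,\delta)$ sending $\ker(u_0\,dt+\beta_0)$ to $\ker((f^*u_1)\,dt+f^*\beta_1)$ and taking $\Sigma_0\times\{0\}$ to the graph of some small function (which we may straighten back to $\Sigma_0\times\{0\}$ using the contact embedding $\Sigma\times(-b,b)\hookrightarrow\Sigma\times(-a,a)$ mentioned at the end of the same discussion). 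Composing with $f\times\mathrm{id}$ yields a contactomorphism $F\co\Sigma_0\times(-\delta,\delta)\to\Sigma_1\times(-\delta,\delta)$ extending $f$ on the $t=0$ slice.

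Third, I would complete the gluing by removing the collar $\Sigma_1\times[0,\delta/2)$ from $M_1$ and attaching what remains to $M_0$ along $F$, identifying the new boundary $\Sigma_1\times\{\delta/2\}$ in $M_1$ with the convex surface $F^{-1}(\Sigma_1\times\{\delta/2\})$ sitting in the extended collar of $\Sigma_0$ in $M_0$. Because $F$ is a contactomorphism and preserves the seam, the contact structures on $M_0$ and on the truncated $M_1$ patch together to give a contact structure on the resulting manifold, into which both $(M_0,\xi_0)$ and $(M_1,\xi_1)$ visibly contact embed. The main subtlety to watch is the orientation bookkeeping that comes from $f$ being orientation-reversing: the transverse contact vector fields $v_0$ (pointing out of $M_0$) and $v_1$ (pointing into the portion of $M_1$ that is discarded) must be chosen in compatible directions so that the two cylindrical models line up coherently under $F$, which in turn ensures that the convex surface data on $\Sigma_0$ pushed forward by $f$ genuinely agrees with that on $\Sigma_1$ with the right sign conventions; after that, the Moser step is entirely formal.
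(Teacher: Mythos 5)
Your proof is correct and follows essentially the same route the paper intends: the theorem is stated with an immediate $\Box$ precisely because it is meant to follow by combining the preceding observations (the normal form $u\,dt+\beta$ on a collar of a convex hypersurface, the convexity of the set of defining pairs $(\beta,u)$ for a fixed abstract convex surface together with the resulting Moser/Gray argument, and the embedding $S\times(-b,b)\hookrightarrow S\times(-a,a)$), exactly as you have assembled them.
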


\section{Contact surgery}
\subsection{Isotropic Spheres}
We begin by describing a standard model for an isotropic $(k-1)$--sphere in a contact $(2n-1)$--dimensional manifold. While this is a well-known result, we present it here for the sake of notation needed later. Consider the contact manifold 
\[
\left(J^1(S^{k-1})\oplus\underline{\R}^{2(n-k)}, dz-\lambda + \sum_{i=1}^{n-k} x_i\, dy_i - y_i\, dx_i\right),
\]
where the jet space $J^1(S^{k-1})=T^*S^{k-1}\oplus\underline{\R}$, $\lambda$ is the Liouville form on $T^*S^{k-1}$, $z$ is the coordinate on $\R$, and the $x_i$, $y_i$ are coordinates on $\R^{2(n-k)}$. It is easy to check that the zero section $Z$ in this bundle is an isotropic $(k-1)$--sphere with trivial conformal symplectic normal bundle. By Lemma~\ref{isogerm}, the germ of a contact structure is determined by this bundle, so if $S$ is any isotropic $(k-1)$--sphere in a contact manifold $(M^{2n-1},\xi)$ with a trivialization of the conformal symplectic normal bundle, then any neighborhood of $S$ contains a tubular neighborhood contactomorphic to a tubular neighborhood of $Z$ in the above model.

Using this model we may easily establish the following regular neighborhood result.

\begin{lemma}\label{stdnbhdiso}
If $S$ is an isotropic $(k-1)$--sphere in a contact manifold $(M^{2n-1},\xi)$ with trivial conformal symplectic normal bundle $CSN(S)$, then in any open set containing $S$ there is a standard neighborhood $N$ with convex boundary $\partial N = S^{k-1}\times S^{2n-k-1}$ contactomorphic to an $\epsilon$--neighborhood $N_\epsilon$ of the zero section $Z$ in $J^1(S^{k-1})\oplus\underline{\R}^{2(n-k)}$. Moreover, under this contactomorphism, the dividing set on the boundary is 
\[
\Gamma_{\partial N} = (\partial N)\cap \{z=0\}\cong S^{k-1}\times S^{2n-k-2}
\]
and the positive and negative regions of $\partial N$ are 
\[
(\partial N)_\pm =  (\partial N_\epsilon)\cap\{\pm z > 0\}
\]
with ideal Liouville forms given by $d(\alpha/z)$, where $\alpha$ is the standard contact from on $J^1(S^{k-1})\oplus\underline{\R}^{2(n-k)}$ discussed above.
\end{lemma}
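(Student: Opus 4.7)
The plan is to reduce to the model space and verify convexity by exhibiting an explicit outward-pointing contact vector field. By Lemma~\ref{isogerm}, since $CSN(S)$ is trivial, some tubular neighborhood of $S$ in $(M,\xi)$ is contactomorphic to an open neighborhood of the zero section $Z$ in the model
\[
\bigl(J^1(S^{k-1})\oplus\underline{\R}^{2(n-k)},\ \alpha = dz-\lambda+\sum_{i=1}^{n-k}(x_i\,dy_i-y_i\,dx_i)\bigr),
\]
and by shrinking $\epsilon$ we can work entirely inside any prescribed open set. I would then take
\[
N_\epsilon \;=\; \{(q,p,z,x,y)\in J^1(S^{k-1})\oplus\underline{\R}^{2(n-k)} : \|p\|^2+z^2+\|x\|^2+\|y\|^2\le \epsilon^2\},
\]
a closed $\epsilon$--disk bundle over $S^{k-1}$, whose boundary is visibly diffeomorphic to $S^{k-1}\times S^{2n-k-1}$ since the fibers are $(2n-k)$--dimensional.

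The key step is to produce the contact vector field making $\partial N_\epsilon$ convex. I would consider the one-parameter family of fiberwise dilations
\[
\phi_t(q,p,z,x,y) = (q,\ e^{-t}p,\ e^{-t}z,\ e^{-t/2}x,\ e^{-t/2}y),
\]
which is globally defined because $p$ lies in a vector bundle over $S^{k-1}$ and $z,x,y$ are coordinates on trivial factors. A direct pullback computation gives $\phi_t^*\alpha = e^{-t}\alpha$, so the (negated) infinitesimal generator
\[
v \;=\; p_i\frac{\partial}{\partial p_i} + z\frac{\partial}{\partial z} + \frac12\sum_{j=1}^{n-k}\Bigl(x_j\frac{\partial}{\partial x_j}+y_j\frac{\partial}{\partial y_j}\Bigr)
\]
is a contact vector field. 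Setting $r^2=\|p\|^2+z^2+\|x\|^2+\|y\|^2$ one has $v(r^2)= 2\|p\|^2+2z^2+\|x\|^2+\|y\|^2 > 0$ on $\partial N_\epsilon$, so $v$ is transverse and points outward; hence $\partial N_\epsilon$ is convex. A short calculation using $\iota_v(dq_i)=0$ and $\iota_v(x_j\,dy_j-y_j\,dx_j)=\tfrac12 x_j y_j-\tfrac12 y_j x_j = 0$ shows $\alpha(v)=z$, so the dividing set is exactly $\Gamma_{\partial N_\epsilon}=\partial N_\epsilon\cap\{z=0\}$, which is diffeomorphic to $S^{k-1}\times S^{2n-k-2}$, and the positive/negative regions are $\partial N_\epsilon\cap\{\pm z>0\}$.

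Finally, the ideal Liouville structure is immediate from the general discussion of convex surfaces preceding the lemma: once $\alpha(v)=z$ is known, the recipe there identifies $\omega_\pm = d(\alpha/z)$ on $(\pm1)^n\overline{(\partial N_\epsilon)_\pm}$, and $\alpha|_{\Gamma_{\partial N_\epsilon}}$ is the induced contact form. The main obstacle I anticipate is purely notational, namely writing $v$ intrinsically on $J^1(S^{k-1})$ rather than in a local trivialization, but the dilation description above bypasses this: $v$ is built from the Euler vector fields of the vector bundles $T^*S^{k-1}\to S^{k-1}$ and $\underline{\R}^{2(n-k)}\oplus\underline{\R}\to S^{k-1}$ (with weights $1$, $1/2$, and $1$ on the $z$--factor), which is manifestly global. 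No further calculation beyond these elementary verifications should be needed.
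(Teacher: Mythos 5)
Your proof is correct and follows essentially the same route as the paper: reduce to the model via Lemma~\ref{isogerm}, take the $\epsilon$--disk bundle, and exhibit the contact vector field $v= z\partial_z+\sum p_i\partial_{p_i}+\tfrac12\sum(x_i\partial_{x_i}+y_i\partial_{y_i})$, whose transversality gives convexity and whose contact Hamiltonian $\alpha(v)=z$ identifies the dividing set and the regions $(\partial N)_\pm$. Your derivation of $v$ as the generator of the global dilation family $\phi_t$ is a pleasant way to make its global definition on $J^1(S^{k-1})$ manifest, but it is the same vector field and the same argument as in the paper.
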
 
\begin{proof}
Using Lemma~\ref{isogerm} we only need to verify the result for neighborhoods of the zero section in the model presented above. To this end denote by $N_\epsilon$ the $\epsilon$--disk bundle in $J^1(S^{k-1})\oplus\underline{\R}^{2(n-k)}$. For any positive $\epsilon$ this will be our standard neighborhood of the zero section. Now notice that the vector field 
\[
v= z\frac{\partial}{\partial z} + \sum_{i=1}^{k-1} p_i\frac{\partial}{\partial{p_i}} + \frac 12 \sum_{i=1}^{n-k} \left(x_i\, \frac{\partial}{\partial{x_i}}+y_i\, \frac{\partial}{\partial{y_i}}\right)
\]
is a contact vector field that points transversely out of all the neighborhoods $N_\epsilon$ and its flow identifies all such neighborhoods, so there is indeed a single model for a standard neighborhood. The vector field $v$ also shows that $S_\epsilon=\partial N_\epsilon\cong S^{k-1}\times S^{2n-k-1}$ is a convex hypersurface. 

Let $f = \iota_v\alpha$ where $\alpha$ 
is the contact form in our model above. Clearly $f(z,q,p, x,y)= z$. Thus $\Gamma_{\partial N_\epsilon}$ is as claimed in the lemma. 
\end{proof}

\begin{remark}
Notice that the attaching region for a Weinstein $k$--handle, see Remark~\ref{handlemodle}, is a neighborhood of an isotropic $S^{k-1}$ and in that model a (scaled) radial normal vector field is the contact vector field. 
\end{remark}

\subsection{Coisotropic spheres}\label{coiso}
In this section we define the standard coisotropic sphere and show it has a standard neighborhood. Before we given the formal definition, it is helpful to keep in mind that we are modeling the belt spheres (that is, the boundary of the co-core) of Weinstein $k$--handles. The formal definition is constructed to characterize this sphere.

Recall that a submanifold $S$ of a contact manifold $(M,\xi)$ is called {\em coisotropic} if $T_xS\cap \xi_x$ is a coisotropic subspace of $\xi_x$ for all $x\in S$. That is, the $d\alpha$--orthogonal of $S_\xi:=T_xS\cap \xi_x$ is contained in $S_\xi$, where $\alpha$ is a $1$--form defining $\xi$. 

Recall that the sphere  $S^{2n-k-1}$ is homeomorphic to the join of spheres $S^{2(n-k)-1}*S^{k-1}$, that is, as the quotient of $S^{2(n-k)-1}\times  S^{k-1} \times [0,1]$, where each of the sets $S^{2(n-k)-1}\times \{p\}\times \{0\}$ and $\{1\}\times \{q\}\times S^{k-1}$ is collapsed to a point. Let $K_1=S^{k-1}$ and $K_2=S^{2(n-k)-1}$ be the locus of collapsed points in the quotient space. Note that $S^{2n-k-1}-K_1= S^{2(n-k)-1}\times D^k$ and $S^{2n-k-1}-K_2= D^{2(n-k)}\times S^{k-1}$.
 
\begin{definition}
For $k \leq n$, an embedded $S^{2n-k-1}$ with trivial normal bundle in a contact $(2n-1)$--manifold $(M,\xi)$ will be called a \dfn{standard coisotropic sphere} if 
\begin{enumerate}
\item $\xi$ is transverse to $S^{2n-k-1}$ except along $K_1=S^{k-1}$ where it is tangent to $S^{2n-k-1}$,
\item $R\times S^{k-1}$ is isotropic for all  rays $R\subset D^{2(n-k)}$ from the origin,  
\item $S^{2(n-k)-1}\times \{p\}$ is transverse to $\xi$ and $\xi$ induces the standard contact structure on $S^{2(n-k)-1}\times \{p\}$, for each $p\in D^k$.
\end{enumerate}
\end{definition}
We first note that the standard coisotropic sphere is indeed a coisotropic submanifold of $(M,\xi)$. To see this, notice that on $S^{2n-k-1}-K_1=S^{2(n-k)-1}\times D^k$, the distribution $S^{2n-k-1}_\xi= (TS^{2n-k-1})\cap \xi$ is $\xi'\oplus TD^k$, where $\xi'$ is the standard contact structure on $S^{2(n-k)-1}$. So if $\alpha$ is a contact form for $\xi$, then $d\alpha$ restricted to $S^{2n-k-1}_\xi$ will vanish precisely on $TD^k$, and since $S^{2n-k-1}_\xi$ is codimension $k$ in $\xi$ we see that $TD^k$ is the $d\alpha$--orthogonal of $S^{2n-k-1}_\xi$ in $\xi$. Similarly, along $K_1=S^{k-1}$ we have that $S^{2n-k-1}_\xi= TS^{2n-k-1}= TD^{2(n-k)}\oplus TS^{k-1}$, and it is easy to see (see the model constructed in the proof of Lemma~\ref{stdnbhdcoiso}) that $TD^{2(n-k)}$ is a symplectic subspace of the symplectic form $d\alpha$ on $\xi$. Moreover, $TS^{k-1}$ is isotropic, so the $d\alpha$--orthogonal to $S^{2n-k-1}_\xi$ is $TS^{k-1}$. Thus our standard coisotropic sphere really is coisotropic. 

\begin{lemma}\label{coisomodel}
The standard coisotropic sphere determines the germ of a contact structure near the sphere. 
\end{lemma}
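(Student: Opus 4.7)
The plan is to produce an explicit local model for the contact germ of a standard coisotropic sphere and then show, via a Moser-type argument, that any standard coisotropic sphere has a contactomorphic neighborhood.

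For the model, I would take the belt sphere $\{0\}\times\partial D_b$ of the Weinstein $k$--handle $H_{a,b}\subset(\R^{2n},d\lambda)$ from Remark~\ref{handlemodle}, identifying $\partial D_b=\partial(D^k_q\times D^{2(n-k)}_{(x,y)})$ with the join $S^{k-1}*S^{2(n-k)-1}$ so that $K_1=\partial D^k_q\times\{0\}$ and $K_2=\{0\}\times S^{2(n-k)-1}_{(x,y)}$. A direct computation in the coordinates of Remark~\ref{handlemodle} shows that $\iota_v\omega$ restricts along the belt sphere to $2q\cdot dp+\tfrac{1}{2}(x\,dy-y\,dx)$, which pulls back to the standard contact form on each fiber of the projection $\partial D_b-K_1\to D^k_q$; from this, properties (1)--(3) of the definition are immediate. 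Thus the belt sphere is a standard coisotropic sphere and we have an explicit contact germ to serve as model.

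Now let $S\subset(M,\xi)$ be an arbitrary standard coisotropic sphere. First, I would match neighborhoods of $K_1$ using Lemma~\ref{isogerm}. The subsphere $K_1\subset S$ is an isotropic $(k-1)$--sphere; properties (2) and (3) canonically trivialize its conformal symplectic normal bundle, with one Lagrangian factor supplied by the isotropic rays in $D^{2(n-k)}$ from property~(2), and the complementary symplectic factor framed by the standard contact structure on the slice spheres $S^{2(n-k)-1}\times\{p\}$ as $p$ limits onto $K_1$. Lemma~\ref{isogerm} then yields a contactomorphism $\phi_0$ from a neighborhood of $K_1$ in $(M,\xi)$ to the corresponding neighborhood of $K_1$ in the model, which can be arranged to carry $S$ onto the model belt sphere near $K_1$. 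Next, I would extend $\phi_0$ across the remainder of $S$ using the foliation from property (3): $S-K_1\cong S^{2(n-k)-1}\times D^k$ is transversally foliated by standard contact spheres. Applying the standard-neighborhood theorem for contact submanifolds parametrically in $p\in D^k$, and matching with the analogous foliation in the model, yields a diffeomorphism $\Phi$ from a neighborhood of $S$ to a neighborhood of the model belt sphere that agrees with $\phi_0$ near $K_1$ and pulls the model contact structure back to one agreeing with $\xi$ along $S$. A relative Moser argument along the coisotropic submanifold $S$ then deforms $\Phi$ into an honest contactomorphism on a possibly smaller neighborhood.

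The step I expect to be the main obstacle is the compatibility of the two extensions at $K_1$, where the parametric construction from property~(3) degenerates (the slice spheres collapse onto $K_1$) while the isotropic-germ construction from Lemma~\ref{isogerm} degenerates in the complementary direction. To glue them into a single $\Phi$ matching contact structures to first order along all of $S$, one must verify that the framing of $CSN(K_1)$ produced by the limiting behavior of the slice foliation is exactly the framing that the isotropic-germ construction expects. The tangency/transversality dichotomy in (1) together with the carefully balanced properties (2) and (3) are designed precisely to force this compatibility, but establishing it to the order needed for Moser's trick is the technical heart of the argument.
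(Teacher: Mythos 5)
Your overall strategy---build an explicit model from the belt sphere of the Weinstein handle of Remark~\ref{handlemodle} and then produce a contactomorphism by combining a neighborhood theorem for $K_1$ with a parametric one for the slice spheres---is genuinely different from the paper's, which never introduces a model at this stage: there, two contact structures on a fixed $S^{2n-k-1}\times D^k$ making the fixed sphere standard coisotropic are compared directly. But your write-up has a real gap, and you name it yourself: the compatibility of the two local identifications at $K_1$, and the resulting first-order agreement of contact forms along all of $S$ needed for Moser's trick, is essentially the entire content of the lemma, and you state that establishing it ``is the technical heart of the argument'' without establishing it. Two concrete problems. First, Lemma~\ref{isogerm} determines only the germ of the contact structure along the isotropic sphere $K_1$; it says nothing about the position of the coisotropic sphere $S$ inside that germ, so the claim that the contactomorphism ``can be arranged to carry $S$ onto the model belt sphere near $K_1$'' is an additional unproven assertion (only the tangent space $TS|_{K_1}=TK_1^{\perp_{d\alpha}}$ is forced; the higher-order jet of $S$ is not). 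Second, the parametric contact-submanifold neighborhood theorem you invoke for the slices $S^{2(n-k)-1}\times\{p\}$ degenerates as $p$ approaches $K_1$ (the slices collapse to points and the relevant conformal symplectic normal bundles change rank), so no off-the-shelf statement produces a single $\Phi$ defined across $K_1$; some explicit control of the asymptotics there is unavoidable.

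For comparison, the paper resolves exactly this difficulty by computation rather than by gluing neighborhood theorems: the defining conditions force any contact form to restrict to $TS$ away from $K_1\cup K_2$ as $g(p,r)\beta$ with $g$ asymptotic to $r^2$ at $K_1$ and positive at $K_2$, so after rescaling two such forms agree on all of $TS$; the normal directions are then matched in two stages (near $K_1$ using compatible almost complex structures together with the Reeb field, then over the rest of $S$), and the Gray--Moser argument is run relative to $D^{2(n-k)}\times K_1$, where the linear interpolation $t\alpha+(1-t)\alpha'$ is constant and hence remains contact. If you want to keep your model-based architecture, you would need to supply an analogous asymptotic analysis at $K_1$ to glue your two charts and to justify that the interpolation used in your ``relative Moser argument'' stays contact near the tangency locus; as written, the proposal is an outline whose hardest step is acknowledged but not carried out.
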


\begin{remark}
In the proof of this result we will also see that a map between two standard coisotropic spheres that preserves the induced characteristic distribution, determines a map between the normal bundles of the sphere. Thus there are no extra choices, like the framing of the conformal symplectic normal bundle for isotropic spheres, when determining a neighborhood of a coisotropic sphere. 
\end{remark}
\begin{proof}[Proof of Lemma~\ref{coisomodel}]
Let $\xi$ and $\xi'$ be two contact structures defined on $S^{2n-k-1}\times D^k$ so that $S^{2n-k-1}\times\{0\}$ is a standard coisotropic sphere in both contact structures. Let $\alpha$ and $\alpha'$ be contact forms for $\xi$ and $\xi'$, respectively. Also, denote by $\beta$ the standard contact form on $S^{2(n-k)-1}$.

On the tangent space to $S^{2n-k-1}-(K_1\cup K_2)=S^{2(n-k)-1}\times S^{k-1}\times (0,1)$, the conditions defining the standard coisotropic sphere imply that $\alpha = g(p,r)\beta$ and $\alpha'=h(p,r)\beta$ for non-negative functions $g,h\co S^{k-1}\times (0,1)\to \R$.  To extend over $K_2$, we must have that as $r$ goes to $1$ both functions approach a positive function. Similarly, to extend over $K_1$ and be a contact form, as $r$ goes to $0$ we must have that the functions are asymptotic to $r^2$ (times any function in $p$ that is positive; see Example~2 below). Notice on $\{p\}\times D^k$ both $1$--forms vanish, according to the definition of a standard coisotropic sphere. Thus, there is a non-zero function $f:S^{2n-k-1}\to \R$ such that $\alpha=f\alpha'$ on the tangent space to $S^{2n-k-1}\times \{0\}$, and so by rescaling $\alpha'$, we can assume that $\alpha$ and $\alpha'$ agree on the tangent space to $S^{2n-k-1}\times \{0\}$.

We now proceed in two steps. In the first step we arrange for the contact structures to agree in a neighborhood of $K_1=S^{k-1}\subset S^{2n-k-1}\times \{0\}$ where $\xi$ and $\xi'$ are both tangent to $S^{2n-k-1}\times \{0\}$. Then the final step to to isotope to make the contact structures the same in a neighborhood of $S^{2n-k-1}\times \{0\}$. 

For the first step we choose a complex structure $J$ and $J'$ compatible with $d\alpha$ and $d\alpha'$ on $\xi$ and $\xi'$. A neighborhood of $K_1$ in $S^{2n-k-1} \times \{0\}$ is $D^{2(n-k)} \times K_1$ (here we are taking the second disk factor very small). The normal bundle to $S^{2n-k-1}\times \{0\}$ near $K_1$ is spanned by $J(T(\{p\}\times S^{k-1}))$ and $X_\alpha$ and also by $J'(T(\{p\}\times S^{k-1}))$ and $X_{\alpha'}$, where $X_\alpha$ is the Reeb vector field for $\alpha$, and similarly for $X_{\alpha'}$. There is an isotopy of a neighborhood of $S^{2n-k-1}\times \{0\}$ in $S^{2n-k-1}\times D^k$ that is fixed on $S^{2n-k-1}\times \{0\}$ that takes one parameterization of the normal bundle to the other in a neighborhood of $K_1$.
We already know that $\alpha=\alpha'$ and $d\alpha=d\alpha'$ on $D^{2(n-k)} \times K_1$, but now after the isotopy, we claim that they are equal on the entire tangent space at $K_1$. Indeed, the normal bundle to $S$ restricted to $K_1$ in $\xi'$, parameterized by $J'$, is taken by the isotopy to the normal bundle in $\xi$, parameterized by $J$, so we see that $\alpha$ and $\alpha'$ now have the same kernel on the tangent space to $S^{2n-k-1}\times D^k$ along $K_1$, and $d\alpha=d\alpha'$ on the kernel. Moreover, since our isotopy takes $X_\alpha$ to $X_{\alpha'}$ along $K_1$, we see that $\alpha=\alpha'$ and $d\alpha=d\alpha'$ on the entire tangent space to $S^{2n-k-1}\times D^k$ along $K_1$, as claimed.

Now consider $\alpha_t=t\alpha + (1-t)\alpha'$. This is constant on $K_1$, and hence a contact form near $K_1$ for all $t$. Moreover, $\frac{\partial \alpha_t}{\partial t}=0$ on the tangent space to $D^{2(n-k)} \times K_1$. One may now proceed as in the proof of Theorem~2.5.23 of \cite{Geiges08} to construct an isotopy fixed on $D^{2(n-k)} \times K_1$ that will take $\xi'$ to $\xi$ in a neighborhood of $K_1$, and so in particular, we can choose contact $1$--forms for them that also agree here. Note that the vector field used to define the isotopy will vanish on $D^{2(n-k)} \times K_1$, and hence can be extended over a neighborhood of $S^{2n-k-1}\times \{0\}$ so that it is 0 on $S^{2n-k-1}\times \{0\}$. Thus we can define an isotopy of a neighborhood of $S^{2n-k-1}\times \{0\}$ that arranges for $\xi$ and $\xi'$ to agree in a neighborhood of $K_1$ and does not change the contact planes on $S^{2n-k-1}\times \{0\}$. We finish this step by rescaling our contact forms $\alpha$ and $\alpha'$ if necessary so that they are again equal on $S^{2n-k-1} \times \{0\}$.

Proceeding now to the second step, we can assume that $\xi=\xi'$ in a neighborhood $U$ of $K_1$ in $S^{2n-k-1}\times D^k$ and that $J=J'$ on $U$ too. 
Notice that $[S^{2n-k-1}-K_1]\times \{0\}= S^{2(n-k)-1}\times D^k$ and all the $D^k$'s are isotropic. The normal bundle to $[S^{2n-k-1}-K_1]\times \{0\}$ can be taken to be $J(T(\{p\}\times D^k))$ or $J'(T(\{p\}\times D^k))$. Building an isotopy outside of $U$ taking one normal bundle to the other we see that $\alpha$ and $d\alpha$ agree with $\alpha'$ and $d\alpha'$ on the tangent space to $S^{2n-k-1}\times D^k$ along $S^{2n-k-1}\times \{0\}$, and from the first step agree on $U$ as well. A standard application of Moser's method gives the desired contactomorphism.  Note that the isotopy is fixed where $\alpha$ and $\alpha'$ already agree, so in particular is fixed in a neighborhood of $K_1$.
\end{proof}

As we prove the following neighborhood theorem for a standard coisotropic sphere, we will come up for a model neighborhood, just like we have for the isotropic spheres.
\begin{lemma}\label{stdnbhdcoiso}
If $S$ is a standard coisotropic $(2n-k-1)$--sphere in a contact manifold $(M^{2n-1},\xi)$, then in any open set containing $S$ there is a standard neighborhood $N$ with convex boundary $\partial N= S^{k-1}\times S^{2n-k-1}$ contactomorphic to a model neighborhood built below. Moreover, under this contactomorphism, $\partial N$ can be identified with the convex surface in the boundary of a model for a neighborhood of an isotropic sphere in Lemma~\ref{stdnbhdiso}.  
\end{lemma}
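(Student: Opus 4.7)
The plan is to apply Lemma~\ref{coisomodel}, which asserts that the contact germ along a standard coisotropic sphere is determined by the sphere, thereby reducing the proof to constructing one explicit model with the claimed standard neighborhood. The natural model is the convex boundary $\partial_+ h^k$ of a Weinstein $k$-handle from Remark~\ref{handlemodle}, with the coisotropic sphere being the belt sphere $S=\{p=0\}\cap\partial_+h^k\cong S^{2n-k-1}$.

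First, I would check that $S$ is a standard coisotropic sphere in $\partial_+h^k$ with its induced contact structure $\xi_0=\ker(\iota_v\omega|_{\partial_+h^k})$. Identify $K_1$ with the $q$-sphere $\{x=y=0\}\cap S$ and $K_2$ with the $(x,y)$-sphere $\{q=0\}\cap S$. A direct computation shows that $\alpha_0=\iota_v\omega$ restricts to $S$ as $\tfrac12(x\,dy-y\,dx)$, from which the three defining conditions of a standard coisotropic sphere (tangency to $\xi_0$ along $K_1$, isotropy of the radial disks $R\times S^{k-1}$, and standard contact structure on each $(x,y)$-sphere slice) follow directly.

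Second, I would take $N=\partial_+h^k\cong D^k\times S^{2n-k-1}$ as the model standard neighborhood. Its boundary $\partial N=\partial D_a\times\partial D_b\cong S^{k-1}\times S^{2n-k-1}$ is the corner of the Weinstein handle $h^k$, which is simultaneously the boundary of the attaching region $\partial_-h^k$. By Lemma~\ref{stdnbhdiso}, the latter is identified with the standard isotropic neighborhood $N_\epsilon\subset J^1(S^{k-1})\oplus\R^{2(n-k)}$, and under this identification the corner corresponds to $\partial N_\epsilon$, yielding the required contactomorphism between $\partial N$ and the convex surface from Lemma~\ref{stdnbhdiso}.

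Finally, I would establish that $\partial N$ is convex, with dividing set and ideal Liouville structures inherited from $\partial N_\epsilon$. The natural setting is the smoothed handle boundary $\tilde\Sigma=\partial_-h^k\cup_{\partial N}\partial_+h^k$, a closed $(2n-1)$-dimensional contact submanifold of $h^k$ in which $\partial N$ sits as a hypersurface. On the $\partial_-h^k$ side, the contact vector field $v_{\mathrm{iso}}$ from Lemma~\ref{stdnbhdiso} is transverse to $\partial N$, so extending it through $\partial N$ to a contact vector field on a two-sided neighborhood in $\tilde\Sigma$ establishes convexity and transfers the convex structure from $\partial N_\epsilon$ onto $\partial N$. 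The main obstacle is the extension: the handle's Liouville vector field $v$ is contact on $h^k$ but transverse to $\partial h^k$, so one must either directly construct a contact vector field on $\partial_+h^k$ matching $v_{\mathrm{iso}}$ on the corner, or project $v$ along the Reeb flow onto $\tilde\Sigma$.
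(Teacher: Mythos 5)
Your overall strategy coincides with the paper's: reduce via Lemma~\ref{coisomodel} to a single model, take that model to be the belt sphere of the Weinstein $k$--handle from Remark~\ref{handlemodle}, and identify the convex boundary of its standard neighborhood with the convex hypersurface $\partial N_\epsilon$ from Lemma~\ref{stdnbhdiso}. (The paper additionally writes down an explicit abstract model on $S^{2n-k-1}\times\R^k_n$ via the join decomposition and an interpolated contact form $f(r)\alpha+g(r)\beta$, which is what the phrase ``model neighborhood built below'' refers to; using the handle directly is mathematically equivalent, since Lemma~\ref{coisomodel} makes any one model as good as another.) Your verification that the belt sphere is a standard coisotropic sphere, via the computation $\iota_v\omega|_S=\tfrac12\sum(x_i\,dy_i-y_i\,dx_i)$, is correct and is exactly the assertion the paper makes.

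However, the step you yourself flag as ``the main obstacle'' is precisely the content of the last claim of the lemma, and you do not close it: neither of your two suggestions (directly constructing a contact vector field on $\partial_+h^k$ matching the one on the corner, or ``projecting $v$ along the Reeb flow'') is carried out, and the first is genuinely delicate because the contact structures on $\partial_-h^k$ and $\partial_+h^k$ are induced on different hypersurfaces, so a contact vector field for one is not a priori a contact vector field for the other. The paper's resolution is worth knowing: with $A'=\partial_-h^k\setminus(\partial D_a\times\{0\})$ and $B'=\partial_+h^k\setminus(\{0\}\times\partial D_b)$, the flow of the Liouville field $v$ embeds the symplectization $\R\times A'$ into $\R^{2n}$, and $B'$ is transverse to the $\R$--direction, hence is a graph over an open subset of $A'$; the projection along the flow is therefore a contactomorphism from $B'$ onto its image in $A'$. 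Under this contactomorphism the convex hypersurface $\partial N_\epsilon\subset A'$ (convex by Lemma~\ref{stdnbhdiso}) corresponds to a hypersurface in $B'$ bounding a neighborhood $N$ of the belt sphere, and convexity, the dividing set, and the ideal Liouville structures all transport across. This single observation replaces both of your proposed workarounds and completes the proof; without it, the identification of $\partial N$ with the convex surface of Lemma~\ref{stdnbhdiso} --- which is the statement actually used to define contact surgery in Section~\ref{contactsurg} --- remains unproved.
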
 

\begin{proof}
By Lemma~\ref{coisomodel} we merely need to build a standard model with the desired properties. To this end, we now build a model contact structure on a neighborhood of the standard coisotropic sphere. The description of $S^{2n-k-1}$ as the join of two spheres at the beginning of this subsection shows that $S^{2n-k-1}=A\cup_\phi B$ where 
\[
A=D^{2(n-k)}\times S^{k-1}, \text{ and } B= S^{2(n-k)-1}\times D^{k},
\]
with $D^{2(n-k)}$ and $D^k$ being  unit disks,
and
\[
\phi\co (A-(\{0\}\times S^{k-1}))\to (B-(S^{2(n-k)-1}\times \{0\}))\co (r,x,y)\mapsto (x,1-r,y)
\]
where 
$
A-(\{0\}\times S^{k-1})= [0,1]\times S^{2(n-k)-1}\times S^{k-1}
$
and
$
B-(S^{2(n-k)-1}\times \{0\})= S^{2(n-k)-1}\times [0,1]\times S^{k-1}.
$

We will build a contact structure on a neighborhood $S^{2n-k-1}\times \R^k_n$ of our sphere by building a contact structure on each piece $A\times \R_n^k$ and $B\times \R_n^k$ and seeing that they fit together to give a global contact structure. Here, $\R^k_n$ should be seen as the normal direction to the sphere when we use this as a model for a coisotropic sphere inside a manifold. To this end, we begin by considering two basic examples. 

\noindent
{\bf Example 1.} Consider $\R^{2k}$ as the cotangent bundle of $\R^k$. If $(q_1,\ldots, q_k)$ are coordinates on $\R^k$, then we denote the coordinates on $\R^{2k}$ by $(q_1,\ldots, q_k, p_1, \ldots, p_k)$, where the canonical $1$--form is given by 
\[
\lambda= \sum p_i\, dq_i.
\]
Let $U^*\R^k$ be the unit sphere bundle in $T^*\R^k$, and denote the restriction of the $1$--form $\lambda$ to $U^*\R^k$ by $\beta$. Using the flow of $\sum p_i\partial_{p_i}$, we can identify the complement of the zero section in $T^*\R^k$ with $\R\times  U^*\R^k$, and $\lambda= e^r \beta$. By exponentiating the first coordinate, we can further identify this with $(0,\infty)\times  U^*\R^k$; now $\lambda=r \beta$, where $r$ is the radial coordinate in the cotangent bundle. 

\noindent
{\bf Example 2.} Consider the unit sphere $S^{2(n-k)-1}$ in $\R^{2(n-k)}$. We have the $1$--form $\eta=\sum x_i\, dy_i -y_i\, dx_i$ on $\R^{2(n-k)}$, and its restriction to $S^{2(n-k)-1}$ is denoted $\alpha$. Notice that as above, $\eta= r^2 \alpha$ (the extra factor of $r$ appears since both the $x_i$ and $y_i$ are scaled). 

Clearly $\alpha$ and $\beta$ are contact $1$--forms on $S^{2(n-k)-1}$ and $U^*\R^k$, respectively. One can easily check that 
\[
f(r) \alpha + g(r) \beta
\]
is a contact form on $\R\times S^{2(n-k)-1}\times U^*\R^k$ if $f'g-g'f>0$ and $f$ and $g$ are non-zero. 

We can identify $(0,1)\times S^{2(n-k)-1}\times U^*\R^k$ with a subset of $A\times \R^k_n= D^{2(n-k)}\times S^{k-1}\times \R^k_n$ by the map $(r,x,p,q)\mapsto (rx,p,q)$, where we are thinking of $S^{k-1}\times \R^k_n$ as $U^*\R^k$. Then if $f(r)=r^2$ and $g(r)=1$ near $0$ we see that $f(r)\alpha + g(r)\beta$ extends to all of $A\times D^k_n$ as $\eta+\beta$. 

Similarly, we identify $(0,1)\times S^{2(n-k)-1}\times U^*\R^k$ with a subset of $B\times \R^k_n=  S^{2(n-k)-1}\times D^{k}\times \R^k_n$ by the map $(r,x,p,q)\mapsto (x,(1-r)p,q)$. Then, if $f(r)=1$ and $g(r)=1-r$ near $r=1$, we see that $f(r)\alpha + g(r)\beta$ extends to all of $B\times \R^k_n$ as $\alpha+\lambda$. 

This gives our model contact structure on $S^{2n-k-1}\times \R^k_n$. 

Consider the model for a Weinstein $k$--handle from Remark~\ref{handlemodle}. There the belt sphere $\{0\}\times \partial D_b$ is a standard coisotropic sphere, as can easily be seen by parameterizing it as the join of the $S^{k-1}$ in the $q_i$ space and the $S^{2(n-k)-1}$ in the $x_i, y_i$ space. Thus by Lemma~\ref{coisomodel}, there is a contactomorphism from an arbitrarily small neighborhood of $S^{2n-k-1}$ in our standard model to a neighborhood of $\{0\}\times \partial D_b$ in the Weinstein $k$--handle. So to complete the lemma, we only need to check the last statement for a neighborhood of the belt sphere $\{0\}\times \partial D_b$ in the Weinstein $k$--handle. To this end, notice that if we consider the attaching region $A=(\partial D_a)\times D_b$ (all notation is from Remark~\ref{handlemodle}) and set $A'=(A-({\partial D_a}\times \{0\}))$, then the flow of the vector field $v$ embeds the symplectization $\R\times A'$ of the contact structure induced on $A'$ into $\R^{2n}$. Moreover, if $B=D_a\times \partial D_b$ is the neighborhood of the belt sphere and  $B'=B-(\{0\}\times \partial D_b)$, then $B'$ is transverse to the $\R$ factor of $\R\times A'$. It is a standard fact that the induced contact structure on $B'$ agrees with the one on $A'$. Thus the convex boundary of a neighborhood of the attaching sphere $(\partial D_a)\times \{0\}$ is identified with a convex surface in $B$ that bounds a neighborhood of the belt sphere. 
\end{proof}

\subsection{Contact surgeries}\label{contactsurg}
With all the preliminaries of the previous two subsections out of the way, we can now define contact surgeries. 
\begin{definition}
If $S^{k-1}$ is an isotropic sphere in a contact manifold $(M^{2n-1},\xi)$, then after choosing a trivialization of its conformal symplectic normal bundle we have a standard neighborhood $N$ of $S^{k-1}$ given by Lemma~\ref{stdnbhdiso}. There is also a neighborhood $N'$ of a model coisotropic sphere $S^{2n-k-1}$ with a trivialized normal bundle given in Lemma~\ref{stdnbhdcoiso}. 
Both neighborhoods have convex boundary and there is a diffeomorphism from $\partial N$ to $\partial N'$ that preserves the dividing sets and the complementary ideal Liouville domains. So by Theorem~\ref{convexgluing}, we can remove $N$ from $M$ and glue $N'$ in its place. The result will be called {\em isotropic surgery} on $S^{k-1}$. 

We can similarly start with a standard coisotropic sphere $S^{2n-k-1}$ with a trivialized normal bundle in $(M^{2n-1},\xi)$ and argue as above to replace a neighborhood of it with a neighborhood of an isotropic sphere $S^{k-1}$. This will be called {\em coisotropic surgery} on $S^{2n-k-1}$.  Note that in both isotropic and coisotropic surgery, the setup mandates starting with a {\em framed} sphere.  Different choices of framings may give rise to non-isotopic or non-contactomorphic contact structures, not to mention non-diffeomorphic manifolds.

We will use the general term {\em contact surgery} for either an isotropic surgery or coisotropic surgery. \hfill$\Box$
\end{definition}

\begin{remark}
Given a formal isotropic embedding of $S^k$ into a $(2n-1)$--manifold, for $1 \leq k \leq n-1$, an $h$--principle guarantees the existence of an isotropic sphere in that formal isotopy class (see, for example, Chapter~7 in \cite{CieliebakEliashberg12}).  However, for coisotropic spheres that are not Legendrian, no such $h$--principle exists.  Indeed, for every $n \geq 3$ and $1 \leq k \leq n-2$, there exist formal coisotropic embeddings of $S^{2n-k-1}$ into a contact $(2n-1)$--manifold that are not formally isotopic to a standard coisotropic sphere (see Theorem~1.3 and Remark~6.1 in \cite{GhigginiNiederkrugerWendl2016}).
\end{remark}

We have the obvious lemma.
\begin{lemma}
After an isotropic surgery, there is a canonical coisotropic sphere on which coisotropic surgery will undo the isotropic surgery, and vice versa.\hfill$\Box$
\end{lemma}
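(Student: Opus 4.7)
The plan is to unwind the definition of surgery in Section~\ref{contactsurg} and observe that the data specifying the two surgeries are inverse to one another. First I would fix an isotropic sphere $S^{k-1}$ in $(M,\xi)$ with a trivialization of its conformal symplectic normal bundle, and let $N$ be its standard neighborhood from Lemma~\ref{stdnbhdiso}. By definition, the isotropic surgery produces
\[
M' = (M\setminus \mathrm{int}\, N)\cup_f N',
\]
where $N'$ is the standard neighborhood of the model coisotropic sphere from Lemma~\ref{stdnbhdcoiso} and $f\co \partial N\to \partial N'$ is the diffeomorphism provided by the last sentence of that lemma: it is a contactomorphism of dividing sets and a symplectomorphism of the ideal Liouville domains, so Theorem~\ref{convexgluing} equips $M'$ with a contact structure $\xi'$.

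Next I would identify the canonical coisotropic sphere. The interior of $N'\subset M'$ contains the model standard coisotropic $(2n-k-1)$-sphere $S_{co}$, together with the trivialization of its normal bundle that is built into the product structure of the model in Lemma~\ref{stdnbhdcoiso}. By that same lemma, the standard neighborhood of this framed $S_{co}$ is (the image in $M'$ of) the entire $N'$; in particular no choices remain.

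Finally I would perform coisotropic surgery on $S_{co}$ using the inverse diffeomorphism $f^{-1}\co \partial N'\to \partial N$ as the gluing datum. Since the inverse of a contactomorphism is a contactomorphism and the inverse of a symplectomorphism is a symplectomorphism, $f^{-1}$ is a valid gluing datum, and the result is
\[
(M'\setminus \mathrm{int}\, N')\cup_{f^{-1}} N \;=\; (M\setminus \mathrm{int}\, N)\cup_{f^{-1}\circ f} N \;=\; (M,\xi),
\]
as desired. The vice versa direction is symmetric, using that Lemma~\ref{stdnbhdcoiso} produces precisely the same model convex boundary as Lemma~\ref{stdnbhdiso}, so the roles of $N$ and $N'$ may be swapped verbatim. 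The only point that requires care is that the canonical coisotropic sphere must come with a canonical framing in order to specify a coisotropic surgery unambiguously; this is automatic because the model neighborhood in Lemma~\ref{stdnbhdcoiso} trivializes the normal bundle, and the remark following Lemma~\ref{coisomodel} records that coisotropic spheres carry no further framing data beyond this trivialization.
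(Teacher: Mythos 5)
Your argument is correct and is precisely the unwinding of definitions that the paper intends --- the lemma is stated there without proof as ``obvious,'' and your identification of the canonical framed coisotropic sphere as the core of the glued-in model neighborhood $N'$ (with the framing supplied by the model and no further choices, per the remark after Lemma~\ref{coisomodel}), followed by regluing via $f^{-1}$, is exactly the intended justification. The only cosmetic caveat is that your final ``equality'' should be read as a contactomorphism rather than a literal identity, since Theorem~\ref{convexgluing} produces the glued contact structure only up to modification in a collar of the gluing hypersurface.
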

It is easy to see how this relates to Weinstein handle attachment.
\begin{proposition}\label{surgeryisweinstein}
Given a contact manifold $(M,\xi=\ker \alpha)$, let $(W,\omega)$ be the result of attaching a Weinstein $k$--handle to an isotropic sphere $S^k$ in $\{b\}\times M$ in $([a,b]\times M, d(e^t \alpha))$.  Denote by $(M',\xi')$ the convex boundary of $W$. Let $S^{2n-k-1}$ be the coisotropic sphere in $M'$ that is the boundary of the co-core of the Weinstein handle.

The contact manifold $(M',\xi')$ is obtained from $(M,\xi)$ by isotropic surgery on $S^{k-1}$ and $(M,\xi)$ is obtained from $(M',\xi')$ by coisotropic surgery on $S^{2n-k-1}$. 
\end{proposition}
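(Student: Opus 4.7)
The plan is to unpack the Weinstein handle model of Remark~\ref{handlemodle} and match the boundary of the enlarged cobordism against the cut-and-paste description of isotropic surgery.

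First, I would identify the attaching region $\partial_- h^k = (\partial D_a)\times D_b$ of the Weinstein $k$--handle with the standard neighborhood of the isotropic sphere $S^{k-1} = (\partial D_a)\times\{0\}$ supplied by Lemma~\ref{stdnbhdiso}. The product structure of the handle canonically trivializes the conformal symplectic normal bundle of $S^{k-1}$, so there are no ambiguous framing choices. Similarly, the outgoing region $\partial_+ h^k = D_a\times (\partial D_b)$ contains the belt sphere $\{0\}\times \partial D_b$, and (as was verified in the last paragraph of the proof of Lemma~\ref{stdnbhdcoiso}) this belt sphere is a standard coisotropic sphere whose standard neighborhood as in Lemma~\ref{stdnbhdcoiso} is precisely $\partial_+ h^k$.

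Second, I would exploit the key observation already used at the end of the proof of Lemma~\ref{stdnbhdcoiso}: the convex hypersurface $\partial(\partial_- h^k) = (\partial D_a)\times (\partial D_b)$ is carried, by the flow of the Liouville vector field $v$ of Remark~\ref{handlemodle} along the complement of the attaching and belt spheres in $\partial h^k$, to the convex hypersurface $\partial(\partial_+ h^k)$ bounding the belt-sphere neighborhood. Because $v$ is a Liouville vector field and preserves the contact structure transversely, this diffeomorphism is the identity on the dividing sets and is a symplectomorphism between the corresponding halves of the ideal Liouville domains on each side.

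Third, I would assemble these ingredients. Attaching $h^k$ to $([a,b]\times M,\,d(e^t\alpha))$ along a chosen standard neighborhood $N$ of $S^{k-1}$ in $\{b\}\times M$ produces a Weinstein cobordism whose upper boundary $(M',\xi')$ is obtained by deleting the interior of $N$ from $M$ and gluing in $\partial_+ h^k$ along the common convex surface $\partial N$. By the identifications of the previous two paragraphs, this is exactly the cut-and-paste operation defining isotropic surgery on $S^{k-1}$: remove a standard neighborhood of the isotropic sphere and insert a standard neighborhood of a coisotropic sphere via a map that preserves dividing sets and complementary ideal Liouville structures, with Theorem~\ref{convexgluing} guaranteeing that the resulting contact structure is well-defined and agrees with $\xi'$.

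Finally, the statement that $(M,\xi)$ is recovered from $(M',\xi')$ by coisotropic surgery on $S^{2n-k-1}$ follows from the preceding lemma (the ``obvious lemma'' above), which asserts that isotropic and coisotropic surgery are mutual inverses: the belt sphere $S^{2n-k-1}$ in $(M',\xi')$ is precisely the canonical coisotropic sphere of that lemma, and its standard neighborhood is $\partial_+ h^k$, whose convex boundary agrees with $\partial N$. The main subtlety I expect is bookkeeping of framings: verifying that the canonical trivialization of $\nu(S^{k-1})$ coming from the product structure $(\partial D_a)\times D_b$ induces, under the flow of $v$, the trivialization of $\nu(S^{2n-k-1})$ used in the standard coisotropic model of Lemma~\ref{stdnbhdcoiso}; once this compatibility is in place, both directions follow from Theorem~\ref{convexgluing}.
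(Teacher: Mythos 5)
Your proposal is correct and follows exactly the route the paper intends: the paper's proof is the single sentence ``This is clear from the proof of Lemma~\ref{stdnbhdcoiso},'' and what you have written is precisely the content of the last paragraph of that proof (the flow of the Liouville field $v$ identifying the convex boundary of the attaching region with a convex surface bounding the belt-sphere neighborhood, matching dividing sets and ideal Liouville halves so that Theorem~\ref{convexgluing} applies). Your elaboration of the framing bookkeeping and the appeal to the preceding lemma for the inverse direction are consistent with, and fill in, what the paper leaves implicit.
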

\begin{proof}
This is clear from the proof of Lemma~\ref{stdnbhdcoiso}. 
\end{proof}
An immediate corollary is the following.
\begin{cor}\label{weinsteinimplysurgey}
If $(M,\xi)$ is Weinstein cobordant to $(M',\xi')$, then each of $(M,\xi)$ and $(M',\xi')$ can be obtained from the other by contact surgeries.  \hfill$\Box$
\end{cor}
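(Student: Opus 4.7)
The plan is to reduce the corollary to Proposition~\ref{surgeryisweinstein} by decomposing the Weinstein cobordism into elementary handle-attachment pieces. Recall from Theorem~\ref{CE} (or the original work of Weinstein \cite{Weinstein91} and Cieliebak--Eliashberg \cite{CieliebakEliashberg12}) that any Weinstein cobordism $(X,\omega,\lambda,f)$ from $(M,\xi)$ to $(M',\xi')$ admits a handle decomposition whose handles are Weinstein handles of index at most $n$. After a small perturbation of $f$, we may assume that the critical values $c_1<c_2<\cdots<c_N$ are distinct, so that $X$ decomposes as a composition of elementary Weinstein cobordisms $X_j$, where each $X_j$ is obtained from a piece of the symplectization $([a_j,b_j]\times M_{j-1}, d(e^t\alpha_{j-1}))$ by attaching a single Weinstein handle $h^{k_j}$ to an isotropic sphere $S^{k_j-1}_j\subset \{b_j\}\times M_{j-1}$, equipped with a framing of its conformal symplectic normal bundle coming from the handle attachment.

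First, I would apply Proposition~\ref{surgeryisweinstein} to each elementary piece $X_j$. This immediately identifies the convex boundary $(M_j,\xi_j)$ of the top of $X_j$ as the result of performing isotropic surgery on the framed isotropic sphere $S^{k_j-1}_j\subset (M_{j-1},\xi_{j-1})$. Concatenating these elementary surgeries along the chain $(M,\xi)=(M_0,\xi_0)\to(M_1,\xi_1)\to\cdots\to(M_N,\xi_N)=(M',\xi')$ exhibits $(M',\xi')$ as obtained from $(M,\xi)$ by a finite sequence of isotropic surgeries, which are in particular contact surgeries.

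Next, I would obtain the reverse direction from the second assertion in Proposition~\ref{surgeryisweinstein}: in each elementary step, $(M_{j-1},\xi_{j-1})$ is recovered from $(M_j,\xi_j)$ by coisotropic surgery on the canonical standard coisotropic sphere $S^{2n-k_j-1}_j\subset M_j$, namely the boundary of the co-core of the handle $h^{k_j}$ (with its canonical normal framing from Lemma~\ref{stdnbhdcoiso}). Traversing the chain backwards yields a sequence of coisotropic surgeries turning $(M',\xi')$ into $(M,\xi)$.

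The argument has no real obstacle: once Proposition~\ref{surgeryisweinstein} is in hand, the only nontrivial ingredient is the existence of a Weinstein handle decomposition of a Weinstein cobordism, which is part of the foundational setup recalled in Section~2. The only minor care required is to observe that the trivial symplectization collars between successive critical levels do not affect the contact structure on the level sets, so that the sequence of elementary surgeries composes cleanly into a single sequence of contact surgeries from $(M,\xi)$ to $(M',\xi')$ (and, read in reverse, from $(M',\xi')$ to $(M,\xi)$).
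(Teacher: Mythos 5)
Your proof is correct and follows exactly the route the paper intends: the corollary is stated as an immediate consequence of Proposition~\ref{surgeryisweinstein}, obtained by decomposing the Weinstein cobordism into elementary handle attachments (one per critical value) and applying the proposition to each piece in both directions. The paper leaves this argument implicit, and your write-up supplies precisely those details.
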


\begin{remark}\label{contactpm1}
Notice that a Legendrian sphere is both isotropic and coisotropic.
In the literature it is common to refer to an isotropic surgery on a Legendrian sphere as {\em Legendrian surgery} or {\em Legendrian $(-1)$--surgery} and coisotropic surgery on a Legendrian sphere as {\em Legendrian $(+1)$--surgery}.

In \cite{Avdek}, Avdek gave a different definition of contact $(\pm 1)$--surgery on Legendrian spheres, based around a generalised Seidel--Dehn twist (see Section 9 there for details); this same definition also appears in \cite{CasalsMurphyPresas}, where Casals, Murphy, and Presas show that the result of contact $(+1)$--surgery on a loose Legendrian knot is overtwisted.  Avdek shows in \cite[Theorem~9.15]{Avdek} that his definition corresponds to Weinstein handle attachment, and hence agrees with our definition, by Proposition~\ref{surgeryisweinstein}.
\end{remark}

\section{Weinstein Cobordisms}\label{scobord}
Our main technical tool to prove Theorems~\ref{mainsurgery} and~\ref{caps} is the following result.
\begin{lemma}\label{preprelim}
Suppose $(M^{2n-1}_i,\xi_i)$ is a contact manifold and $(W_i, \omega_i)$ are Weinstein cobordisms with $\partial_-W_i=(M^{2n-1}_i,\xi_i)$, for $i=1,2$. If $M=\partial_+W_1=\partial_+W_2$ and the contact structure $\xi'_1$ on $\partial_+W_1$ is homotopic through almost-contact structures to the contact structure $\xi_2'$ on $\partial_+W_2$, then there is a fixed contact manifold $(M',\xi')$ and a Weinstein cobordism $(W'_i,\omega_i')$ from  $(M^{2n-1}_i,\xi_i)$ to $(M',\xi')$.
\end{lemma}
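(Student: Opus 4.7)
My plan is to compose each $W_i$ with an additional Weinstein cobordism so that the resulting convex boundaries coincide as contact manifolds. By Theorem~\ref{glue}, this reduces the lemma to the following subproblem: given two almost-contact homotopic contact structures $\xi_1', \xi_2'$ on the single manifold $M$, find Weinstein cobordisms from $(M,\xi_i')$ to a common contact manifold $(M',\xi')$ for $i=1,2$; the required $W_i'$ is then the concatenation of $W_i$ with such a cobordism.

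To construct such cobordisms, I would first apply Theorem~\ref{BCS} (Bowden--Crowley--Stipsicz) to each $(M,\xi_i')$, producing almost-complex cobordisms $X_i$ to the universal almost-contact manifold $(M_{max},\eta_{max})$ with handles of index at most $n$. Then Theorem~\ref{CE} (Cieliebak--Eliashberg) promotes each $X_i$ to a Weinstein cobordism $V_i$ whose convex boundary is $(M_{max},\zeta_i)$ for some contact structure $\zeta_i$ on $M_{max}$ lying in the almost-contact class of $\eta_{max}$. Because $\xi_1'$ and $\xi_2'$ are almost-contact homotopic, both $\zeta_1$ and $\zeta_2$ lie in the same almost-contact homotopy class.

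The decisive step is then to upgrade ``$\zeta_1 \simeq \zeta_2$ through almost-contact structures'' to ``$\zeta_1$ is isotopic to $\zeta_2$ as contact structures.'' My plan is to invoke Theorem~\ref{ot} (Borman--Eliashberg--Murphy): any two overtwisted contact structures in the same almost-contact class are isotopic. Accordingly, I would further extend each $V_i$ by a Weinstein cobordism chosen so that the new convex boundary is overtwisted and still in the almost-contact class of $\eta_{max}$. Once this is done, BEM identifies the two tops and we set $(M',\xi')$ to be this common overtwisted representative.

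The main obstacle is precisely producing an overtwisted convex boundary by a Weinstein cobordism, since Weinstein handle attachment does not in general introduce overtwistedness. Overcoming this requires a careful construction, such as stacking an auxiliary Weinstein cobordism whose convex boundary is independently known to be overtwisted in the target almost-contact class (built, for example, from a separate application of BCS $+$ CE starting from an already overtwisted seed in the same class), and then Weinstein-connect-summing it on. The subtlety is to keep the bottom boundaries fixed as $(M,\xi_i')$ throughout this stacking. Once the overtwisted common target has been arranged, Theorem~\ref{ot} finishes the matching, and concatenating with $W_i$ via Theorem~\ref{glue} yields the desired $(W_i',\omega_i')$.
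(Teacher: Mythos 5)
Your reduction to the case of two almost-contact homotopic contact structures $\xi_1',\xi_2'$ on the same manifold $M$ is correct (and your detour through $M_{max}$ via Theorems~\ref{BCS} and~\ref{CE} is harmless, though unnecessary, since the lemma already hands you that situation). You have also put your finger on exactly the right difficulty: Borman--Eliashberg--Murphy only identifies the two structures once both are \emph{overtwisted}, and there is no general mechanism for making the convex end of a Weinstein cobordism overtwisted. But your proposed resolution of that difficulty has a genuine gap. Applying BCS~$+$~CE to an overtwisted seed produces a Weinstein cobordism \emph{from} an overtwisted manifold, and the contact structure induced on its convex boundary need not be overtwisted at all, so the auxiliary piece is not ``independently known to be overtwisted.'' Worse, ``Weinstein-connect-summing it on'' cannot be done while keeping the concave boundary equal to $(M,\xi_i')$: realizing a contact connected sum as a Weinstein $1$--handle attachment requires either putting the second summand into the concave boundary (which changes the bottom of the cobordism) or capping it off with a Weinstein filling, and an overtwisted contact manifold admits no such filling. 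So the step that was supposed to produce an overtwisted top does not go through.

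The paper circumvents this by going \emph{down} to the overtwisted world rather than up. Choose a stabilized (hence loose) Legendrian sphere $\Lambda$ in a small ball where $\xi_1'$ and $\xi_2'$ have been arranged to agree, and a Reeb push-off $\Lambda'$. Contact $(+1)$--surgery (coisotropic surgery, the \emph{inverse} of a Weinstein handle attachment) on $\Lambda$ in $(M,\xi_1')$ and on $\Lambda'$ in $(M,\xi_2')$ yields overtwisted manifolds by Theorem~\ref{+1surgot}, lying in the same almost-contact class, hence isotopic by Theorem~\ref{ot}; identify them as a single $(\widehat M,\widehat\xi)$ containing the two disjoint dual spheres $\widehat\Lambda$ and $\widehat\Lambda'$. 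Now $(-1)$--surgery (a genuine Weinstein handle attachment) on $\widehat\Lambda$ recovers $(M,\xi_1')$, and a further $(-1)$--surgery on the image of $\widehat\Lambda'$ lands in a manifold $(M',\xi')$; reversing the order shows $(M,\xi_2')$ is also Weinstein cobordant to the same $(M',\xi')$. The overtwisted manifold thus sits at the \emph{bottom}, where Weinstein non-fillability is irrelevant, and the common target upstairs is reached from each $\xi_i'$ by an honest handle attachment. You would need to replace your ``connect-sum on an overtwisted cap'' step with an argument of this kind for the proof to close.
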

\begin{proof}
After an isotopy we can assume that $\xi'_1$ and $\xi'_2$ agree in the neighborhood $U$ of a point. In this neighborhood we can choose a Legendrian sphere $\Lambda$ that has been stabilized, and hence is loose in both $\xi'_1$ and $\xi'_2$. Let $\Lambda'$ be a copy of $\Lambda$ pushed off by the Reeb field and consider $\Lambda$ as a Legendrian sphere in $(M,\xi'_1)$ and $\Lambda'$ as a Legendrian sphere in $(M,\xi'_2)$. Let $(\widehat{M},\widehat{\xi}_1)$ be the result of Legendrian $(+1)$--surgery on $\Lambda$ in $(M,\xi_1')$ and similarly let $(\widehat{M},\widehat{\xi}_2)$ be the result of Legendrian $(+1)$--surgery on $\Lambda'$ in $(M,\xi'_2)$. Since $\Lambda$ and $\Lambda'$  are loose, both of these contact manifolds are overtwisted, by Theorem~\ref{+1surgot}. Moreover, they are in the same almost-contact class (note they are homotopic as plane fields in the complement of $U$ and agree on $U$). Thus, $\widehat{\xi}_1$ is isotopic to $\widehat{\xi}_2$ by Theorem~\ref{ot}, and we assume that they have been isotoped to be the same. 

Let $\widehat{\Lambda}$ and $\widehat{\Lambda'}$ be the dual Legendrian spheres in the surgered manifold $(\widehat{M}, \widehat{\xi}_1)$. We can assume they are disjoint. So Legendrian $(-1)$--surgery on $\widehat{\Lambda}$ will result in $(M,\xi_1')$ and inside $(M,\xi_1')$ we have the image of $\widehat{\Lambda'}$ (which we denote by the same symbol). Further Legendrian $(-1)$--surgery on $\widehat{\Lambda'}$ in $(M,\xi_1')$ will result in some contact manifold $({M'},{\xi'})$, and clearly $(M,\xi_1)$ is Weinstein cobordant to $({M'},{\xi'})$. Reversing the order of the surgeries will show that $(M,\xi_2')$ is also Weinstein cobordant to $({M'},{\xi'})$. Concatenating these cobordisms with the original cobordisms gives the desired result. 
\end{proof}

\begin{proposition}\label{cobordtostein}
Any closed oriented contact manifold $(M,\xi)$ is Weinstein cobordant to a Stein fillable contact structure.
\end{proposition}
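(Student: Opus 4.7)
The plan is to combine Corollary~\ref{partialcobord} with Lemma~\ref{preprelim}, using a fixed Stein fillable reference manifold to promote the Weinstein cobordism produced by Corollary~\ref{partialcobord} to one ending at a genuinely Stein fillable contact structure.

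Assuming $\dim M \geq 5$, I would first apply Corollary~\ref{partialcobord} to $(M,\xi)$, obtaining a Weinstein cobordism $(W_1,\omega_1)$ from $(M,\xi)$ to some contact structure $(M_{max},\xi')$ on the Bowden--Crowley--Stipsicz manifold $M_{max}$. Next, take any Stein fillable contact manifold $(\widehat M,\widehat\xi)$ of dimension $2n-1$ (for instance $(S^{2n-1},\xi_{std})$) and apply Corollary~\ref{partialcobord} again to obtain a Weinstein cobordism $(W_2,\omega_2)$ from $(\widehat M,\widehat\xi)$ to $(M_{max},\xi'')$. By the second half of the proof of Corollary~\ref{partialcobord}, $\xi'$ and $\xi''$ lie in the same almost-contact homotopy class on $M_{max}$.

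Now I would invoke Lemma~\ref{preprelim} on the pair $(W_1,\omega_1)$ and $(W_2,\omega_2)$, whose top boundaries are both supported on $M_{max}$ in the same almost-contact class. This produces a single contact manifold $(M',\xi'_*)$ together with Weinstein cobordisms $(W_1',\omega_1')$ from $(M,\xi)$ to $(M',\xi'_*)$ and $(W_2',\omega_2')$ from $(\widehat M,\widehat\xi)$ to $(M',\xi'_*)$. The cobordism $W_1'$ is the candidate for the desired Weinstein cobordism; what remains is to verify that $(M',\xi'_*)$ is Stein fillable.

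For this last step I would glue a Stein filling $(\widehat X,\omega_{\widehat X})$ of $(\widehat M,\widehat\xi)$ to $(W_2',\omega_2')$ along $(\widehat M,\widehat\xi)$ via Theorem~\ref{glue}. By Remark~\ref{EliashRem}, the resulting symplectic manifold is itself a Stein domain; its convex boundary is $(M',\xi'_*)$, so $(M',\xi'_*)$ is Stein fillable, as required. The three-dimensional case is not covered by Corollary~\ref{partialcobord} but is handled by the analogous classical result of Etnyre--Honda \cite{EtnyreHonda02a} cited in the introduction. The main subtlety in the argument is ensuring that the two applications of Corollary~\ref{partialcobord} produce contact structures in a common almost-contact class on $M_{max}$; this is precisely the content of that corollary and is what makes Lemma~\ref{preprelim} applicable.
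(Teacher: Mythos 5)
Your proposal is correct and follows essentially the same route as the paper: Corollary~\ref{partialcobord} to land on $M_{max}$ in the almost-contact class of a Stein fillable structure, Lemma~\ref{preprelim} to merge with a reference Stein fillable manifold, and Remark~\ref{EliashRem} to transport Stein fillability across the resulting Weinstein cobordism. You are merely more explicit than the paper about how Lemma~\ref{preprelim} is invoked (feeding it the two cobordisms over $M_{max}$ rather than the two contact structures directly), which is a fair unpacking rather than a different argument.
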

\begin{proof}
If $(M,\xi)$ is $3$--dimensional then the result follows from Theorem~1.1 in \cite{EtnyreHonda02a}, so we now assume that the dimension is larger than 3. In this case, Corollary~\ref{partialcobord} gives a Weinstein cobordism from $(M,\xi)$ to a contact manifold $(M',\xi')$ that is in the same almost-contact homotopy class as a Stein fillable contact structure. Now Lemma~\ref{preprelim} gives a Weinstein cobordism from $(M',\xi')$ to a Stein fillable contact structure $(M'',\xi'')$. These cobordisms can be glued to produce a Weinstein cobordism from $(M,\xi)$ to the Stein fillable contact structure $(M'',\xi'')$. 
\end{proof}

\subsection{Symplectic caps}
We can now prove Theorem~\ref{caps} concerning the existence of symplectic caps for any contact manifold. 
\begin{proof}[Proof of Theorem~\ref{caps}]
Let $(M,\xi)$ be any closed oriented contact manifold. Proposition~\ref{cobordtostein} gives a Weinstein cobordism from $(M,\xi)$ to a Stein fillable contact structure $(M',\xi')$. Assume that $X$ is the Stein filling of $(M',\xi')$. The corollary to Lisca and Mati\'c's theorem, Corollary~\ref{steincap}, gives a symplectic cap $(W,\omega)$ for $(M',\xi')$. Using the gluing result Theorem~\ref{glue}, the manifold $(W,\omega)$ and the Stein cobordism from $(M,\xi)$ to $(M',\xi')$ may be glued together to produce the symplectic cap for $(M,\xi)$.  

To get infinitely many caps, we begin by choosing a Stein filling $(W, \omega)$ of an exotic contact structure $\xi$ on $S^{2n-1}$ that is homotopic to the standard contact structure. We take $W$ to be a connected sum of Brieskorn varieties when $n$ is odd, and a connected sum of Brieskorn varieties and products of spheres when $n$ is odd (see, for example, \cite[Lemma~2.17]{BowdenCrowleyStipsicz14}); these can be chosen to give the standard almost contact structure on the boundary. Let $(W_k,\omega_k)$ be the boundary connected sum of $k$ copies of $(W, \omega)$, and let $\xi_k$ be the induced contact structure on $S^{2n-1} = \partial W_k$.  Note that the almost-contact class of $(S^{2n-1}, \xi_k)$ is independent of $k$.

As mentioned in Remark~\ref{finitemaximals}, given any finite $n$, there is a single Stein fillable contact manifold $(M'_n, \xi'_n)$ such that $(M,\xi)\#(S^{2n-1},\xi_k)$ is Weinstein cobordant to $(M'_n, \xi'_n)$ for all $k = 1, \ldots, n$.  We claim that the underlying smooth cobordism from $M=M\#S^{2n-1}$ to $M'_n$ can be chosen to be independent of $n$. Then the union of $W_i$, a $1$--handle joining $[0, 1] \times M$ to $W_i$, the cobordism to $M'_n$, and a fixed cap for $M'_n$ provides $n$ distinct symplectic caps for $(M, \xi)$.  A simple examination of the homology of $W_i$ shows that these caps cannot be obtained from each other by a sequence of blow-ups and blow-downs. Since $n$ was arbitrary, this finishes the proof.

To prove the claim, we choose an almost-complex cobordism from $(M, \xi)$ to $(M_{max}, \eta_{max}, \omega_{\max})$ with handles of index at most $n$, provided by Theorem~\ref{BCS}. For each $k = 1, \ldots, n$, we get a Weinstein cobordism from $(M, \xi)\#(S^{2n-1}, \xi_k)$ to some contact structure $\xi_{max, k}$ on $M_{max}$, by Theorem~\ref{CE}, all in the same almost-contact class. Similarly to the proof of Lemma~\ref{preprelim}, for each $k$, there are $(n-1),$  $n$--handles that we can attach to $(M_{max}, \xi_{max, k})$ that will result in the contact manifold $(M'_n, \xi'_n)$, independent of $k$. As is clear from the proof of Lemma~\ref{preprelim}, the smooth isotopy classes of the attaching spheres for the $n$--handles is independent of $k$. Hence, the smooth cobordism from $M$ to $M'_n$ that is the union of the cobordism to $M_{max}$ and the $n$--handles is independent of $k$, proving the claim. 
\end{proof}

It is now a simple matter to establish Corollary~\ref{embed} that any strong filling of a contact manifold embeds in a closed symplectic manifold. 
\begin{proof}[Proof of Corollary~\ref{embed}]
Given a symplectic filling $(X,\omega)$ of a contact manifold $(M,\xi)$, Theorem~\ref{caps} gives a symplectic cap $(W,\omega_W)$ for $(M,\xi)$. Hence Theorem~\ref{glue} produces a closed symplectic manifold into which $(X,\omega)$ embeds. 
\end{proof}

\subsection{Constructing manifolds via contact surgeries}
With the above in hand, we now establish Theorem~\ref{mainsurgery} concerning the construction of all contact manifolds by contact surgeries from $(S^{2n-1},\xi_{std})$. 
\begin{proof}[Proof of Theorem~\ref{mainsurgery}]
Given any contact manifold $(M,\xi)$, Proposition~\ref{cobordtostein} gives a Weinstein cobordism from $(M,\xi)$ to a Stein fillable contact structure $(M',\xi')$. The Stein cobordism gives a Weinstein cobordism from $(S^{2n-1},\xi_{std})$ to $(M',\xi')$ by simply removing the $0$--handle of the filling. Now Corollary~\ref{weinsteinimplysurgey} implies that $(M,\xi)$ is related to $(M',\xi')$ by contact surgeries, and $(M',\xi')$ is related to $(S^{2n-1},\xi_{std})$ by contact surgeries, hence so are $(M,\xi)$ and $(S^{2n-1},\xi_{std})$.
\end{proof}

\bibliography{references}
\bibliographystyle{plain}

\end{document}